\newtheorem{theorem}{Theorem}
\newtheorem{proposition}{Proposition}
\newtheorem{lemma}{Lemma}
\newtheorem{corollary}{Corollary}
\theoremstyle{definition}
\newtheorem{remark}{Remark}
\newtheorem{assump}{Assumption}
\newcommand{\R}{\mathbb R}
\newcommand{\N}{\mathbb N}
\newcommand{\OO}{\mathcal O}
\newcommand{\F}{\mathcal F}
\newcommand{\C}{\mathcal C}
\newcommand{\f}{{\tilde f}}
\newcommand{\rhot}{{\tilde \rho}}
\newcommand{\doo}{\overline{d}}
\newcommand{\dd}{\mathbf{d}}
\newcommand{\U}{V}
\newcommand{\bbb}{\mathbf{b}}
\DeclareMathOperator{\Ric}{Ric}
\DeclareMathOperator{\Sect}{Sect}
\DeclareMathOperator{\Cut}{Cut}
\DeclareMathOperator{\tr}{tr}
\DeclareMathOperator{\vol}{vol}
\newcommand{\Cutst}{\Cut_{\mathrm{ST}}}
\author{
  Kazumasa Kuwada\footnote{Partially supported by the JSPS fellowship for research abroad}
  \,
  and Robert Philipowski
}
\title{Non-explosion of diffusion processes on manifolds with time-dependent metric}
\date{}
\begin{document}

\maketitle

\begin{abstract}
We study the problem of non-explosion of diffusion processes on a manifold with time-dependent Riemannian metric.
In particular we obtain that Brownian motion cannot explode in finite time if the metric evolves under backwards Ricci flow.
Our result makes it possible to remove the assumption of non-explosion in the pathwise contraction result
established by Arnaudon, Coulibaly and Thalmaier (arXiv:0904.2762, to appear in S\'em. Prob.).

As an important tool which is of independent interest we derive an It\^o formula for the distance from a fixed reference point,
generalising a result of Kendall (Ann.~Prob.~15 (1987), 1491--1500).
\\\\
{\bf Keywords:} Ricci flow, diffusion process, non-explosion, radial process.\\
{\bf AMS subject classification:} 53C21, 53C44, 58J35, 60J60.
\end{abstract}

%%%%%
%%%%%
\section{Brownian motion with respect to time-changing Riemannian metrics} 
Let $M$ be a $d$-dimensional differentiable manifold, $\pi: \F(M) \to M$ the frame bundle and $(g(t))_{t \in [0,T]}$
a family of Riemannian metrics on $M$ depending smoothly on $t$ such that $(M, g(t))$ is geodesically complete for all $t \in [0,T]$.
Let $(e_i)_{i = 1}^d$ be the standard basis of $\R^d$.
For each $t \in [0,T]$ let $(H_i(t))_{i=1}^d$ be the associated $g(t)$-horizontal vector fields on $\F(M)$
(i.e.~$H_i(t,u)$ is the $g(t)$-horizontal lift of $ue_i$), and let $(V_{\alpha, \beta})_{\alpha, \beta = 1}^d$
be the canonical vertical vector fields. 
Let $(W_t)_{t \geq 0}$ be a standard $\R^d$-valued Brownian motion.
In this situation Arnaudon, Coulibaly and Thalmaier~\cite{act, coulibaly} defined horizontal Brownian motion
on $\F(M)$ as the solution of the following Stratonovich SDE:
\[
dU_t = \sum_{i=1}^d H_i(t, U_t) \circ dW_t^i - \frac{1}{2} \sum_{\alpha, \beta = 1}^d \frac{\partial g}{\partial t}(t, U_te_\alpha, U_t e_\beta) V_{\alpha \beta}(U_t) dt.
\]
They showed that if $U_0 \in \OO_{g(0)}(M)$, then $U_t \in \OO_{g(t)}(M)$ for all $t \in [0,T]$.
$g(t)$-Brownian motion on $M$ is then defined as $X_t := \pi U_t$.
We denote the law of $g(t)$-Brownian motion on $M$ started at $x$ by $P^x$, and expectation with respect to that measure by $E^x$.

\section{Main result}

The main result of this paper is the following theorem:

\begin{theorem}\label{mainresult}
If the family of metrics evolves under backwards super Ricci flow, i.e.
\begin{equation} \label{backwards}
\frac{\partial g}{\partial t} \leq \Ric,
\end{equation}
then Brownian motion on $M$ cannot explode up to time $T$. %in $[0,T]$.
In particular this result holds for backwards Ricci flow $\frac{\partial g}{\partial t} = \Ric$.
\end{theorem}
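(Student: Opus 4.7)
The plan is to reduce non-explosion of $X$ to almost-sure finiteness of the radial semimartingale $r_t := d_{g(t)}(o, X_t)$ for a fixed reference point $o\in M$. I would attack this in three stages: an It\^o decomposition of $r_t$, a drift bound exploiting the super Ricci flow hypothesis, and a Khasminskii-type comparison.

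First, I would invoke the generalised Kendall formula announced in the abstract; on the smooth complement of the time-dependent cut locus of $o$ it should yield
\[
r_t = r_0 + \beta_t + \int_0^t \Bigl(\tfrac12\Delta^{g(s)} r^{g(s)} + \partial_s r^{g(s)}\Bigr)(X_s)\,ds - L_t,
\]
where $r^{g(s)}(\cdot) := d_{g(s)}(o,\cdot)$, $\beta$ is a one-dimensional Brownian motion built from the stochastic anti-development of $X$, and $L_t \geq 0$ is a non-decreasing local time supported on the cut locus. Next, fixing $s$ and taking a minimising unit-speed $g(s)$-geodesic $\gamma:[0,r_s]\to M$ from $o$ to $X_s$, the first variation of arc-length together with $\partial_s g \leq \Ric$ gives
\[
\partial_s r^{g(s)}(X_s) \leq \tfrac12 \int_0^{r_s} (\partial_s g)(\dot\gamma,\dot\gamma)\,du \leq \tfrac12 \int_0^{r_s} \Ric(\dot\gamma,\dot\gamma)\,du,
\]
while Laplacian comparison via the linear Jacobi variation $(u/r_s)E_i(u)$ along a $g(s)$-parallel orthonormal frame normal to $\dot\gamma$ yields
\[
\Delta^{g(s)} r^{g(s)}(X_s) \leq \frac{d-1}{r_s} - \int_0^{r_s} \frac{u^2}{r_s^2}\,\Ric(\dot\gamma,\dot\gamma)\,du,
\]
whose sum is a drift estimate of the form $\tfrac{d-1}{2 r_s}$ plus a weighted Ricci integral. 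Finally, I would apply a Khasminskii-type criterion to a proper positive function $\phi$ on $[0,\infty)$ (for instance $\phi(r)=\log(1+r^2)$): It\^o applied to $\phi(r_t)$, together with $\phi'\geq 0$ to discard the $-\phi'(r_t)\,dL_t$ contribution, and stopping at the hitting times $\zeta_n$ of the radial balls $\{r\leq n\}$, reduces non-explosion to a Gronwall-type bound on $E^x[\phi(r_{t\wedge\zeta_n})]$.

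The main obstacle will be the drift estimate: the weighted Ricci integral that survives the addition in the second stage is not manifestly controlled, since the hypothesis $\partial_s g \leq \Ric$ is only a one-sided bound on $\Ric$. The heart of the proof should therefore lie in handling this residual term---either via a sharper choice of Jacobi variation tailored to the super Ricci flow so that the Ricci contribution cancels exactly against $\partial_s r^{g(s)}$, or by an argument extracting enough control from $\partial_s g \leq \Ric$ to absorb the residual into the test function~$\phi$.
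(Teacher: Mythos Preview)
Your overall architecture matches the paper: It\^o formula for $r_t$ with a non-decreasing cut-locus term, followed by a drift bound along a minimal geodesic, followed by a one-dimensional comparison. You have also correctly located the crux: with the linear test field $(u/r_s)E_i(u)$ the Ricci weights do not match, and the residual $\tfrac12\int_0^{r_s}(1-u^2/r_s^2)\Ric(\dot\gamma,\dot\gamma)\,du$ is uncontrolled from above by the one-sided hypothesis $\partial_s g\le\Ric$.

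What is missing is the actual choice of variation that closes this gap; the paper carries out your option~(a). Along the fixed $g(t)$-geodesic $\gamma$ let $G$ solve
\[
G''(u)=-\frac{\Ric_{g(t)}(\dot\gamma(u),\dot\gamma(u))}{d-1}\,G(u),\qquad G(0)=0,\ G'(0)=1,
\]
and take the test fields $Y_i(u)=\dfrac{G(u)}{G(r)}X_i(u)$ in the index lemma. A short computation gives $\sum_{i=2}^d I(Y_i,Y_i)=(d-1)G'(r)/G(r)$, while the first-variation formula together with $\partial_t g\le\Ric$ gives $2\,\partial_t\rho(t,\gamma(r))\le -(d-1)\int_0^r G''/G$. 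Hence
\[
\bigl(\Delta_{g(t)}\rho+2\,\partial_t\rho\bigr)(t,\gamma(r))\ \le\ F(r):=(d-1)\Bigl[\frac{G'(r)}{G(r)}-\int_0^r\frac{G''(u)}{G(u)}\,du\Bigr],
\]
and the point is that $F'(r)=-(d-1)\bigl(G'(r)/G(r)\bigr)^2<0$: the combined drift is \emph{non-increasing} in $r$. Thus $F(r)\le F(r\wedge r_1)$ for a fixed $r_1>0$ below the injectivity radius at $o$; on $[0,r_1]$ one has a uniform lower Ricci bound $-(d-1)k_1^2$, whence $G'/G\le k_1\coth(k_1\cdot)$ there and $F$ is dominated by a function $\bar F$ depending only on data in a fixed neighbourhood of $o$. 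This is exactly the ``sharper Jacobi variation'' you anticipated; without it (or an equivalent device) your Stage~2 does not close, and no choice of $\phi$ in Stage~3 can absorb an unbounded-from-above Ricci integral.
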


By recent work (see section~\ref{related}),
it has turned out that backwards Ricci flow tends to compensate the effects of Ricci curvature on the behaviour of heat flow and Brownian motion.
Thus our result is quite natural 
because a lower bound of Ricci curvature yields the non-explosion property in the fixed metric case. 

\begin{remark}\hfill 
\begin{enumerate}
\item
In Section~\ref{nonsymmetric} we will give an extension of Theorem~\ref{mainresult} including the case of non-symmetric diffusion processes.
\item
For the question of explosion or non-explosion of Brownian motion on a manifold equipped with a fixed Riemannian metric
see e.g.~\cite{grigoryan}, \cite[Section~7.8]{hackenbrochthalmaier} or \cite[Section~4.2]{hsu}.
\end{enumerate}
\end{remark}

As an important tool we prove the following It\^o formula for the radial process $\rho(t, X_t)$,
where $\rho(t,x)$ denotes the distance with respect to $g(t)$ between $x$ and a fixed reference point $o$:

\begin{theorem}\label{itoradial}
There exists a non-decreasing continuous process $L$ which increases only when $X_t \in \Cut_{g(t)}(o)$ such that
\begin{equation} \label{eq:Itoradial} 
\rho(t, X_t) = \rho(0, X_0) + \int_0^t \left[ \frac{1}{2} \Delta_{g(s)} \rho + \frac{\partial \rho}{\partial s} \right] (s, X_s) ds
+ \sum_{i = 1}^d \int_0^t (U_s e_i) \rho(s,X_s) d W_s^i - L_t.
\end{equation}
\end{theorem}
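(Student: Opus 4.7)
The plan is to adapt Kendall's argument for the static-metric case to the time-dependent setting. Let $\Cutst := \{(t,x)\in[0,T]\times M : x\in\Cut_{g(t)}(o)\}$ denote the space-time cut locus. On $([0,T]\times M)\setminus(\Cutst\cup([0,T]\times\{o\}))$ the distance $\rho$ is smooth in both variables, and an application of the first variation of length to the unique minimising $g(t)$-geodesic $\gamma$ from $o$ to $x$ (parametrised by $g(t)$-arclength) gives
\[
\frac{\partial \rho}{\partial t}(t,x) = \frac{1}{2}\int_0^{\rho(t,x)} \frac{\partial g}{\partial t}(t)(\dot\gamma_s,\dot\gamma_s)\,ds.
\]
The Laplacian $\Delta_{g(t)}\rho(t,\cdot)$ is classical on the same set, and a standard Laplacian comparison argument shows that the distributional Laplacian of $\rho(t,\cdot)$ differs from its classical value by a non-positive singular measure supported on $\Cut_{g(t)}(o)$; this is the source of the monotone correction $-L_t$.

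The heart of the proof is a smooth approximation. For each $\epsilon>0$ I would construct a $C^\infty$ function $\rho_\epsilon\colon[0,T]\times M\to\R$ with $\rho_\epsilon\geq\rho$, $\rho_\epsilon=\rho$ off an $\epsilon$-neighbourhood of $\Cutst\cup([0,T]\times\{o\})$, and $\rho_\epsilon\to\rho$ uniformly on compacta, together with matching derivative convergence away from the exceptional set; a convenient realisation uses a spacetime Greene--Wu type regularisation, or averages the triangle-inequality upper bounds $d_{g(t)}(y,x)+d_{g(t)}(y,o)$ over small perturbations $y$ of $o$. Applying the standard time-dependent It\^o formula to $\rho_\epsilon(t,X_t)$, using the defining SDE $X_t=\pi U_t$ and the fact that $(U_s e_i)_{i=1}^d$ is $g(s)$-orthonormal at $X_s$, yields
\[
\rho_\epsilon(t,X_t) = \rho_\epsilon(0,X_0) + \int_0^t\Bigl[\tfrac12\Delta_{g(s)}\rho_\epsilon+\tfrac{\partial\rho_\epsilon}{\partial s}\Bigr](s,X_s)\,ds + \sum_{i=1}^d\int_0^t (U_se_i)\rho_\epsilon(s,X_s)\,dW_s^i.
\]
Letting $\epsilon\downarrow 0$, the left-hand side converges to $\rho(t,X_t)$; the stochastic integrals converge (by dominated convergence, using the uniform Lipschitz bound $|\nabla_{g(s)}\rho|_{g(s)}\leq 1$) to $\sum_i\int_0^t(U_se_i)\rho(s,X_s)\,dW_s^i$; and the bounded-variation drift converges to a continuous finite-variation process whose absolutely continuous part reproduces the bracketed integral in \eqref{eq:Itoradial}. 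The remaining singular part, supported on $\{s:X_s\in\Cut_{g(s)}(o)\}$, has the correct sign by the distributional Laplacian inequality above, and defines a non-decreasing continuous process $L_t$.

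The main obstacle is the rigorous handling of the \emph{moving} cut locus. Kendall's argument in the static case exploits that $\Cut(o)$ has measure zero in $M$, which together with the smoothness of the heat kernel implies that almost surely $X_s\notin\Cut(o)$ for Lebesgue-a.e.\ $s$; this is essential when identifying the absolutely continuous part of the drift. In the present setting one needs the analogue that $\{s\in[0,t]:X_s\in\Cut_{g(s)}(o)\}$ has Lebesgue measure zero almost surely. This should follow from a Fubini-type argument combined with the smoothness of the one-time marginals of $X$ (up to the explosion time) and the fact that $\Cut_{g(s)}(o)$ is closed with empty interior for each $s$, with the time-dependence handled by localising on short intervals where the cut locus varies smoothly. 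A secondary technical obstacle is to construct $\rho_\epsilon$ in such a way that $\tfrac12\Delta_{g(s)}\rho_\epsilon+\partial_s\rho_\epsilon$ is locally bounded above uniformly in $\epsilon$, which is needed to legitimise the passage to the limit in the drift term.
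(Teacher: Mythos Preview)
Your overall plan is sound and shares the essential ingredients with the paper's proof: smoothness of $\rho$ off $\Cutst$, the formula for $\partial_t\rho$, the a.s.\ Lebesgue-nullity of $\{s:X_s\in\Cut_{g(s)}(o)\}$, and a one-sided comparison near the cut locus to extract the sign of $L$. Where you diverge is in the \emph{mechanism} for handling the cut locus. You propose a \emph{global} smooth upper approximation $\rho_\epsilon\ge\rho$ (Greene--Wu or an averaged triangle bound) and a passage to the limit in the full It\^o formula. The paper instead follows Kendall's original device: it introduces stopping times $T_{n-1}^\delta<S_n^\delta<T_n^\delta$ that bracket each excursion to the space-time cut locus, applies the ordinary It\^o formula on the ``good'' intervals $(T_{n-1}^\delta,S_n^\delta)$, and on each ``bad'' interval $(S_n^\delta,T_n^\delta)$ replaces $\rho$ by the \emph{single} shifted comparison function $\rho^+(t,x)=d_{g(t)}(o,\tilde o)+d_{g(t)}(\tilde o,x)$, where $\tilde o$ is chosen on the $g(S_n^\delta)$-minimising geodesic from $o$ to $X_{S_n^\delta}$ at distance $i_M/3$ from $o$. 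This pathwise, piecewise construction yields directly that $\rho(t,X_t)-\int_0^t V(\rho(s,X_s))\,ds$ is a supermartingale, from which the semimartingale decomposition and the monotonicity of $L$ follow by letting $\delta\downarrow 0$ and using that $\sum_n(T_n^\delta-S_n^\delta)\to 0$.

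The trade-off is this: your global-smoothing route is conceptually clean but, as you yourself flag, requires building $\rho_\epsilon$ with a \emph{uniform} upper bound on $\tfrac12\Delta_{g(s)}\rho_\epsilon+\partial_s\rho_\epsilon$, and neither a Greene--Wu convolution nor an average of $d_{g(t)}(y,x)+d_{g(t)}(y,o)$ over $y$ near $o$ obviously achieves this (the latter is not even smooth, since $\Cut_{g(t)}(y)$ need not miss $x$ for generic $y$). The paper's stopping-time approach sidesteps this entirely: the single shifted point $\tilde o$ is chosen so that $(t,X_t,\tilde o)$ stays a definite distance from $\Cutst$ throughout $[S_n^\delta,T_n^\delta]$, whence $\rho^+$ is genuinely smooth there and its drift is controlled by the explicit comparison function $V$ coming from Laplacian comparison plus the $C_1$-Lipschitz bound on $t\mapsto d_{g(t)}$. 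Thus the paper converts both of your ``obstacles'' into routine estimates at the cost of a slightly more elaborate decomposition of the time axis. Your measure-zero argument for the time spent on the cut locus is essentially the paper's Lemma~\ref{cutlocus}, proved there via the existence of a fundamental solution for $\partial_t u=\tfrac12\Delta_{g(t)}u$ rather than by any local smoothness of the cut-locus dependence on $t$.
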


\begin{remark} \hfill
\begin{enumerate}
\item The usual It\^o formula fails to apply because the distance function is not smooth at the cut-locus.
A priori it is even not clear that $\rho(t,X_t)$ is a semimartingale.
%The process $L_t$ may hence be interpreted as a sort of local time.

\item In the case of a fixed Riemannian metric Theorem~\ref{itoradial} was proved by Kendall~\cite{kendall}
(see also \cite[Theorem~7.254]{hackenbrochthalmaier} or \cite[Theorem~3.5.1]{hsu}).
The idea of our proof is based on Kendall's original one. 
\end{enumerate}

\end{remark}

\section{Remarks concerning related work} \label{related}
McCann and Topping~\cite{mccanntopping} 
(see Topping~\cite{topping} and Lott~\cite{lott} also) 
showed contraction in the Wasserstein metric 
for the heat equation under backwards Ricci flow on a compact manifold. 
More precisely, they showed that the following are equivalent:
\begin{enumerate}
\item \label{super}
$g$ evolves under backwards super Ricci flow, i.e.~$\frac{\partial g}{\partial t} \leq \Ric$.

\item \label{contraction}
Whenever $u$ and $v$ are two non-negative unit-mass solutions of the heat equation
\[
\frac{\partial u}{\partial t} = \frac{1}{2} \Delta_{g(t)} u - \left( \frac{1}{2} \tr \frac{\partial g}{\partial t} \right) u
\]
(the term $\left( \frac{1}{2} \tr \frac{\partial g}{\partial t} \right) u$ comes from the change in time of the volume element),
the function $t \mapsto W_2(t, u(t, \cdot) \vol_{g(t)}, v(t, \cdot) \vol_{g(t)})$ is non-increasing. Here
\[
W_2(t, \mu, \nu) := \left( \inf_{\pi} \int_{M \times M} d_{g(t)} (x,y)^2 \pi(dx, dy) \right)^{1/2}
\]
is the $L^2$-Wasserstein distance of two probability measures $\mu$ and $\nu$ on $M$. (The infimum is over all probability measures $\pi$
on $M \times M$ whose marginals are $\mu$ and $\nu$.)
\end{enumerate}
% In particular, 
% the second property is equivalent to 
% non-negativity of the Ricci curvature 
% in the fixed metric case 
% (see \cite{Sturm-vonRenesse} also). 
% It means that 
% backwards Ricci flow is characterised by 
% the compensation with 
% control on the heat equation 
% which comes from the Ricci curvature. 
It means that backwards super Ricci flow is characterised by the contractivity property for solutions of the heat equation. 
% 
% In particular in the constant metric case this contractivity is equivalent to nonon-negative Ricci curvature (see \cite{Sturm-vonRenesse} also). 
% In a fixed metric case, it is well-known that 
% the behavior of the heat equation is controlled by the Ricci curvature. 
% If the metric depends on time, the effect of time evolution appears 
% as an additional control on the heat equation.  
% McCann and Topping's result can be interpreted that 
% the compensation of these two effects, which is observed through 
% the analysis of the heat equation, 
% completely characterizes backwards super Ricci flow.   
% 
% This phenomena can be explained that the effect of time evolution of metric 
% compensates the effect of the Ricci curvature on the behavior of the heat equation if and only if the metric evolves under backward super Ricci flow. 
Moreover, in recent work by 
Topping~\cite{topping} and Lott~\cite{lott} 
(see  Brendle~\cite{brendle} also) 
the heat equation and the theory of optimal transport 
are efficiently used to 
derive several monotonicity results 
including a new proof for the monotonicity of 
Perelman's reduced volume. 
These facts indicate that 
it would be effective 
for deeper understanding of Ricci flow 
to study the heat equation 
in conjunction with backwards Ricci flow 
and the theory of optimal transport. 

The non-explosion property of the Brownian motion 
is one of the first problems we face 
when we begin to consider 
the heat equation on a noncompact manifold. 
Our result tells us that 
it is always satisfied 
as far as we consider 
the heat equation under backwards Ricci flow. 
It will be quite helpful 
for the study of Ricci flow 
on a noncompact manifold 
by means of the heat equation. 
In fact, 
our result enables us to remove the assumption 
on the non-explosion in recent work 
by Arnaudon, Coulibaly and Thalmaier~\cite[Section~4]{act2}. 
They extend McCann and Topping's implication 
\ref{super} $\Rightarrow$ \ref{contraction} 
in the case on a noncompact manifold. 
In addition, 
they sharpen the monotonicity of 
$L^2$-Wasserstein distance 
to a pathwise contraction 
in the following sense; 
There is a coupling 
$( \bar{X}_t^{(1)} , \bar{X}^{(2)}_t)_{t \ge 0}$ of 
two Brownian motions 
starting from $x,y \in X$ respectively 
so that 
$t \mapsto d_{g(t)} ( \bar{X}^{(1)}_t , \bar{X}^{(2)}_t )$ 
is non-increasing almost surely. 
By taking an expectation, 
we can derive the monotonicity of 
the $L^2$-Wasserstein distance from it. 
The sharpness of their pathwise contraction 
looks 
useful for the study of 
the optimal transport associated with 
a more general cost function 
than the squared distance, 
e.g.~$\mathcal{L}$-optimal transportation 
studied in the above mentioned papers 
\cite{topping,lott,brendle}. 
As a consequence of our result, 
we can consider such a problem 
without assuming the compactness of the underlying space. 

\section{Proof of Theorem~\ref{itoradial}: It\^o's formula for the radial process}
Since it suffices to prove Theorem~\ref{itoradial} before the exit time of $X$ of an arbitrarily large relatively compact open subset of $M$,
we may assume that $M$ is compact and that therefore its injectivity radius 
\[
i_M := \inf \{ d_{g(t)}(x, y) \, | \, t \in [0,T], y \in \Cut_{g(t)}(x) \}
\]
is strictly positive and that we have a uniform bound for the sectional curvature $\Sect_{g(t)}$:
\[
|\Sect_{g(t)} | \leq K^2 \quad \mbox{for all } t \in [0,T].
\]
We first state It\^o's formula for smooth functions:

\begin{lemma}\label{ito2}
Let $f$ be a smooth function on $[0,T] \times M$. Then%, and let $\f: [0,T] \times \F(M) \to \R$ be defined by $\f(t,u) := f(t,\pi u)$. Then
\begin{eqnarray*}
d f(t, X_t) %& = & \frac{\partial f}{\partial t}(t, X_t) dt + \sum_{i = 1}^d H_i(t) \f(t,U_t) d W_t^i + \frac{1}{2} \Delta_{g(t)} f(t,X_t) dt\\
& = & \frac{\partial f}{\partial t}(t, X_t) dt + \frac{1}{2} \Delta_{g(t)} f(t,X_t) dt + \sum_{i = 1}^d (U_t e_i) f(t,X_t) d W_t^i.
\end{eqnarray*}
\end{lemma}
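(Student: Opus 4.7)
The plan is to lift $f$ to a function on the frame bundle, apply the Stratonovich chain rule to the SDE defining $U_t$, and then project back to $M$. Set $\tilde f : [0,T] \times \F(M) \to \R$ by $\tilde f(t,u) := f(t, \pi u)$, so $f(t,X_t) = \tilde f(t, U_t)$. Since $f$ is smooth and the Stratonovich SDE for $U_t$ has smooth (time-dependent) coefficients, the time-dependent Stratonovich chain rule gives
\[
d\tilde f(t,U_t) = \frac{\partial \tilde f}{\partial t}(t,U_t)\,dt + \sum_{i=1}^d H_i(t,U_t)\tilde f(t,U_t) \circ dW_t^i - \frac{1}{2}\sum_{\alpha,\beta=1}^d \frac{\partial g}{\partial t}(t, U_te_\alpha, U_te_\beta)\, V_{\alpha\beta}(U_t)\tilde f(t,U_t)\,dt.
\]
The crucial simplification is that $\tilde f$ is constant along the fibres of $\pi$, so every canonical vertical vector field annihilates it. Hence the last drift term drops out, and also $\partial_t \tilde f(t,u) = \partial_t f(t, \pi u)$.

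Next I would convert the Stratonovich integral into an Itô integral. Using the identity $H_i(t,u)\tilde f(t,u) = (ue_i)f(t,\pi u)$, I apply the same time-dependent Stratonovich chain rule to $g_i(t,U_t) := H_i(t,U_t)\tilde f(t,U_t)$ in order to compute the quadratic covariation with $W^i$. Only the horizontal martingale part contributes (the vertical drift again annihilates the lift, and the $\partial_t$ piece has finite variation), yielding the correction
\[
\frac{1}{2}\sum_{i=1}^d H_i(t,U_t)H_i(t,U_t)\tilde f(t,U_t)\,dt.
\]

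The final step is to recognize this Bochner-type expression as the Laplacian. By the Arnaudon--Coulibaly--Thalmaier result quoted in the paper, the assumption $U_0 \in \OO_{g(0)}(M)$ forces $U_t \in \OO_{g(t)}(M)$, so $(U_te_i)_{i=1}^d$ is a $g(t)$-orthonormal frame at $X_t$. The standard pointwise identity for horizontal vector fields on the orthonormal frame bundle then gives
\[
\sum_{i=1}^d H_i(t,U_t)H_i(t,U_t)\tilde f(t,U_t) = \Delta_{g(t)} f(t,X_t),
\]
and reading off the martingale part $\sum_i (U_te_i)f(t,X_t)\,dW_t^i$ finishes the proof.

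I do not anticipate a serious obstacle: everything reduces to classical Eells--Elworthy--Malliavin horizontal-lift bookkeeping. The only point requiring attention is the time-dependence of $H_i$, which must be tracked carefully when iterating the Stratonovich chain rule to produce the Itô correction, but this is exactly the verification that the construction of \cite{act, coulibaly} is designed to handle; no new analytic ingredient is needed beyond smoothness of $g$ in $t$ and of $f$ in both variables.
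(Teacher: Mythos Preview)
Your proposal is correct and follows essentially the same route as the paper: lift $f$ to $\tilde f = f\circ(\mathrm{id}\times\pi)$ on the frame bundle, apply It\^o's formula to the SDE for $U_t$, kill the vertical drift because $\tilde f$ is fibre-constant, and identify $\sum_i H_i(t)^2\tilde f$ with $\Delta_{g(t)}f$ via the standard horizontal-Laplacian identity. The only cosmetic difference is that the paper writes the It\^o form of the formula directly (with the $\tfrac12\sum_i H_i(t)^2$ correction already present) rather than passing through the Stratonovich chain rule and converting as you do.
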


\begin{proof}
It\^o's formula applied to a smooth function $\f$ on $[0,T] \times \F(M)$ gives
\begin{eqnarray}
d \f(t, U_t) & = & \frac{\partial \f}{\partial t}(t, U_t) dt + \sum_{i = 1}^d H_i(t) \f(t,U_t) d W_t^i + \frac{1}{2} \sum_{i=1}^d H_i(t)^2 \f(t,U_t) dt \nonumber\\
&& {} - \frac{1}{2} \sum_{\alpha, \beta = 1}^d \frac{\partial g}{\partial t}(t, U_t e_\alpha, U_t e_\beta) V_{\alpha \beta} \f (t, U_t) dt. \label{ito1}
\end{eqnarray}
Now let $\f(t,u) := f(t,\pi u)$. By definition of $H_i(t)$, $H_i(t) \f(t,u) = (u e_i) f(t,\pi u)$.
Moreover, it is well known (see e.g.~\cite[Proposition~3.1.2]{hsu}) that $\sum_{i=1}^d H_i(t)^2 \f(t,u) = \Delta_{g(t)} f(t,\pi u)$.
Finally, since $\f$ is constant in the vertical direction, the last term in \eqref{ito1} vanishes, so that the claim follows.
\end{proof}

\begin{lemma}\label{density}
Let $G(x, \tau, y, t)$ ($x,y \in M$, $0 \leq t < \tau \leq T$) be the fundamental solution of the equation
$\frac{\partial u}{\partial t} = \frac{1}{2} \Delta_{g(t)}u$ (see \cite{guenther} for existence).
Then for all $\tau \in (0,T]$ and all $x \in M$ the law of $X_\tau$ under $P^x$ is absolutely continuous with respect to the volume measure
(note that this property does not depend on the choice of the Riemannian metric), and its density with respect to the 
$g(0)$-volume measure is given by $y \mapsto G(x, \tau, y, 0)$.
\end{lemma}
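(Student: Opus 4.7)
Proof plan:

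The plan is a duality argument via It\^o's formula. It suffices to show that for every $\psi \in C^\infty(M)$,
\begin{equation*}
E^x[\psi(X_\tau)] = \int_M G(x, \tau, y, 0)\, \psi(y)\, \vol_{g(0)}(dy),
\end{equation*}
which characterises the law of $X_\tau$ by density of $C^\infty(M)$ in $C(M)$ and hence yields absolute continuity together with the claimed formula for the density.

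I would first construct the unique smooth solution $v \colon [0, \tau] \times M \to \R$ of the backward terminal-value problem
\begin{equation*}
\frac{\partial v}{\partial s} + \frac{1}{2}\Delta_{g(s)} v = 0 \quad \text{on } [0, \tau) \times M, \qquad v(\tau, \cdot) = \psi,
\end{equation*}
whose existence and smoothness follow from standard parabolic theory on the compact manifold $M$ (equivalently, from a time-reversed Guenther-type construction). By Lemma~\ref{ito2}, the bounded-variation part of the semimartingale $v(s, X_s)$ is identically $\bigl(\frac{\partial v}{\partial s} + \frac{1}{2}\Delta_{g(s)} v\bigr)(s, X_s)\,ds \equiv 0$, so $v(\cdot, X_\cdot)$ is a local martingale, and in fact a true martingale by boundedness on the compact manifold. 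Consequently
\begin{equation*}
v(0, x) = E^x[v(\tau, X_\tau)] = E^x[\psi(X_\tau)].
\end{equation*}

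It remains to identify $v(0, x)$ with the right-hand side of the claimed formula. This is a duality statement relating the solution operator of the backward equation to Guenther's forward fundamental solution $G$. On a fixed Riemannian manifold the two are equal by the symmetry of the heat kernel; in the time-dependent setting the correspondence carries a correction factor from the evolving volume element, since $\Delta_{g(s)}$ is self-adjoint only with respect to $\vol_{g(s)}$ and not with respect to $\vol_{g(0)}$. Verifying that Guenther's normalisation of $G$ exactly absorbs this Jacobian $\vol_{g(s)}/\vol_{g(0)}$ is the principal technical step, and is the hard part of the argument; once this compatibility is in hand, Chapman--Kolmogorov and the smoothness of $G$ away from the diagonal complete the identification and the lemma follows.
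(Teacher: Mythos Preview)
Your overall strategy---compute $E^x[\psi(X_\tau)]$ by applying It\^o's formula (Lemma~\ref{ito2}) to a smooth solution of a heat-type equation and then identify the result via Guenther's fundamental solution---is exactly the paper's. The difference lies in which Cauchy problem you solve. The paper takes the \emph{forward} initial-value problem $\partial_t u = \tfrac{1}{2}\Delta_{g(t)} u$, $u(0,\cdot)=\varphi$, for which Guenther's Corollary~2.2 directly gives the representation $u(\tau,x)=\int_M G(x,\tau,y,0)\,\varphi(y)\,d\vol_{g(0)}(y)$, and then applies It\^o to the time-reversed function $(t,x)\mapsto u(\tau-t,x)$; the link to $G$ is thus immediate and no Jacobian or duality calculation ever appears. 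Your backward terminal-value formulation makes the martingale step cleaner (the drift $\partial_s v + \tfrac{1}{2}\Delta_{g(s)} v$ vanishes by construction), but you pay for this by having to relate your $v$ to Guenther's forward kernel---precisely the step you flag as ``the hard part''. That step is not entirely trivial in the time-dependent setting (it is where the evolving-volume correction would enter), and it is entirely avoidable: if you use the forward problem as the paper does, the identification with $G$ is a one-line citation of Guenther and the whole proof collapses to three lines. Same ingredients, but you have shifted the weight onto the harder of the two possible bookkeeping tasks.
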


\begin{proof}
Fix $\varphi \in \C^2(M)$, and let $u$ be the solution of the initial value problem
\[
\left\{ \begin{array}{lll}
\frac{\partial u}{\partial t} & = & \frac{1}{2} \Delta_{g(t)}u\\
u(0, \cdot) & = & \varphi.
\end{array} \right.
\]
Then by Corollary~2.2 in \cite{guenther},
\[
u(\tau, x) = \int_M G(x, \tau, y, 0) \varphi(y) d vol_{g(0)}(y).
\]
Now apply It\^o's formula to $X$ and the function $(t,x) \mapsto u(\tau-t,x)$ to obtain
\begin{eqnarray*}
u(0, X_\tau) & = & u(\tau, X_0) - \int_0^\tau \frac{\partial u}{\partial t}(\tau-t, X_t) dt + \frac{1}{2} \int_0^\tau \Delta_{g(t)} u(t-\tau,X_t) dt + \mbox{martingale}\\
 & = & u(\tau, X_0) + \mbox{martingale,}
\end{eqnarray*}
so that
\[
E^x \left[ \varphi(X_\tau) \right] = E^x \left[ u(0, X_\tau) \right] = E^x \left[ u(\tau, X_0) \right] = u(\tau, x) = \int_M G(x, \tau, y, 0) \varphi(y) d vol_{g(0)}(y).
\]
Since $\varphi$ is arbitrary the claim is proved.
\end{proof}

\begin{lemma} \label{cutlocus}
$
\{
  t \in [0,T] 
  \, | \,
  X_t \in \Cut_{g(t)} (o)
\}
$
has Lebesgue measure zero almost surely.
\end{lemma}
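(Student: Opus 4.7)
The plan is a standard Fubini argument combining two facts: the marginal law of $X_t$ at each fixed time is absolutely continuous (Lemma~\ref{density}), and the cut locus of a point in a Riemannian manifold has zero volume measure.

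First I would recall the classical fact that for each fixed $t \in (0,T]$, $\Cut_{g(t)}(o) \subset M$ has $\vol_{g(t)}$-measure zero (this is due to the standard parametrisation of the complement of the cut locus by the tangent cut locus via the exponential map, which pushes forward Lebesgue measure). Since all the volume measures $\vol_{g(t)}$ are mutually absolutely continuous, $\Cut_{g(t)}(o)$ also has $\vol_{g(0)}$-measure zero. Combining this with Lemma~\ref{density}, for each $t \in (0,T]$ and each starting point $x$,
\[
P^x\!\left( X_t \in \Cut_{g(t)}(o) \right) = \int_{\Cut_{g(t)}(o)} G(x,t,y,0)\, d\vol_{g(0)}(y) = 0.
\]

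Next I would apply Fubini's theorem to the indicator of the set $A := \{(t,\omega) \in [0,T] \times \Omega : X_t(\omega) \in \Cut_{g(t)}(o)\}$ to conclude
\[
E^x\!\left[ \int_0^T \mathbf{1}_{\Cut_{g(t)}(o)}(X_t)\, dt \right] = \int_0^T P^x\!\left( X_t \in \Cut_{g(t)}(o) \right) dt = 0,
\]
which forces the inner integral, i.e.\ the Lebesgue measure of the set in question, to vanish $P^x$-almost surely. The contribution of $t=0$ is negligible as a single point.

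The one technical point that needs care is the joint measurability of $A$, which is needed to invoke Fubini. I would justify this by noting that under the standing compactness reduction made at the start of Section~4, the function $(t,x) \mapsto d_{g(t)}(o,x)$ is continuous, and the cut locus can be characterised as the set where the squared distance function fails to be smooth; more concretely, one can write $M \setminus \Cut_{g(t)}(o)$ as the image under the continuous map $(t,v)\mapsto \exp^{g(t)}_o(v)$ of an open set in $[0,T]\times T_oM$ depending measurably on $t$, so that $A$ is a Borel subset of $[0,T]\times\Omega$. This measurability issue is the only mildly delicate step; everything else is a direct combination of Lemma~\ref{density} with the classical geometric fact that cut loci are volume-null.
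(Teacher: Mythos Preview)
Your proof is correct and follows essentially the same route as the paper: combine Lemma~\ref{density} with the classical fact that $\Cut_{g(t)}(o)$ has zero Riemannian volume, then apply Fubini to conclude that the expected occupation time in the cut locus vanishes. The paper's proof is terser and does not address the measurability of $A$ explicitly; note that the cleanest justification is simply that the space-time cut locus $\Cutst$ is closed (as observed just after Lemma~\ref{mct}), so $A$ is the preimage of a closed set under the continuous map $(t,\omega)\mapsto (t,X_t(\omega),o)$.
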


\begin{proof}
Since by Lemma~\ref{density} for each $t \in (0,T]$ and any starting point $x \in M$, the law of $X_t$ under $P^x$ is absolutely continuous 
with respect to the $g(t)$-Riemannian volume measure, and since moreover the cut-locus $\Cut_{g(t)}(o)$ has $g(t)$-volume zero
(see e.g.~\cite[Theorem~7.253]{hackenbrochthalmaier} or \cite[Proposition~3.1]{chavel}), we have
\[
E^x \left[ \int_0^T 1_{\{X_t \in \Cut_{g(t)}(o)\}} dt \right] = \int_0^T P^x \left[ X_t \in \Cut_{g(t)} (o) \right] dt = 0,
\]
so that almost surely $\int_0^T 1_{\{X_t \in \Cut_{g(t)}(o)\}} dt = 0$.
\end{proof}

%%%%%
%%%%%

We now apply Lemma~\ref{ito2} to the process $\rho(t, X_t)$ 
up to singularity. 
%where $\rho(t,x)$ denotes the distance with respect to $g(t)$ between $x$ and a fixed point $o$.
As long as $X_t$ stays away from $o$ and the $g(t)$-cut-locus of $o$,
\begin{equation} \label{eq:Ito_preliminary}
d \rho(t, X_t) =  d \beta_t + \frac{1}{2} \left[ \Delta_{g(t)} \rho + 2 \frac{\partial \rho}{\partial t} \right] (t, X_t) dt , 
\end{equation}
where $\beta_t$ is the martingale term given by 
\[
\beta_t := \sum_{i = 1}^d \int_0^t H_i(s) \rhot(s,U_s) d W_s^i . 
\]
As we will observe in Lemma~\ref{negligible}, the singularity of $\rho(t,x)$ at $o$ is negligible.
The quadratic variation $\langle \beta \rangle_t$ of $\beta_t$ 
is computed as follows:
\begin{eqnarray*}
\langle \beta \rangle_t & = & \sum_{i = 1}^d \int_0^t \left[ H_i(s) \rhot(s,U_s) \right]^2 ds\\
& = & \sum_{i = 1}^d \int_0^t \left[ (U_s e_i) \rho(s,X_s) \right]^2 ds\\
& = & \int_0^t |\nabla_{g(s)} \rho(s,X_s)|^2 ds\\
& = & t.
\end{eqnarray*}
Thus $\beta_t$ is a standard one-dimensional Brownian motion. 
\begin{lemma}[Lemma~5 and Remark~6 in \cite{mccanntopping}]\label{mct}
The function $(t,x) \mapsto \rho(t,x)$ is smooth 
%on the complement of the space-time cut-locus
%\[
%\Cut_{ST} := \{ (t,x,y) \in [0,T] \times M \times M \, | \, y \in \Cut_{g(t)}(x) \},
%\]
whenever $x \notin \{o\} \cup \Cut_{g(t)}(o)$, and
\[
\frac{\partial \rho}{\partial t}(t,x) 
= 
\frac{1}{2} \int_0^{\rho (t, x)} 
\frac{\partial g}{\partial t} (\dot{\gamma}(s), \dot{\gamma}(s)) 
ds,
\]
where $\gamma: [0, \rho (t,x)] \to M$ is the unique minimizing unit-speed $g(t)$-geodesic joining $o$ to $x$.
\end{lemma}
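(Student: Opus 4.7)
The plan is to first establish that $\rho$ is smooth on the open set
\[
\mathcal{U} := \{(t,x) \in [0,T] \times M : x \notin \{o\} \cup \Cut_{g(t)}(o)\}
\]
by applying the implicit function theorem to the exponential map $\exp_o^{g(t)}$, and then to compute $\partial_t \rho$ via a first-variation argument exploiting the fact that the $g(t_0)$-minimizing geodesic is a critical point of the length functional at time $t_0$.

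For the smoothness step, fix $(t_0, x_0) \in \mathcal{U}$ and let $v_0 \in T_o M$ be the unique vector with $\exp_o^{g(t_0)}(v_0) = x_0$ and $|v_0|_{g(t_0)} = \rho(t_0, x_0)$. Since $x_0 \notin \Cut_{g(t_0)}(o)$, the differential of $\exp_o^{g(t_0)}$ at $v_0$ is a linear isomorphism. The joint map $(t, v) \mapsto \exp_o^{g(t)}(v)$ is smooth in $(t, v)$, since it is the time-one flow of the geodesic ODE whose coefficients (the Christoffel symbols of $g(t)$) depend smoothly on $t$. The implicit function theorem therefore yields a smooth map $V$ on a neighborhood $\mathcal{W}$ of $(t_0, x_0)$ such that $\exp_o^{g(t)}(V(t,x)) = x$ and $V(t_0, x_0) = v_0$. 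Openness of $\mathcal{U}$ allows us to shrink $\mathcal{W}$ inside $\mathcal{U}$, ensuring that $V(t,x)$ generates the $g(t)$-minimizing geodesic from $o$ to $x$ throughout $\mathcal{W}$; hence $\rho(t,x) = |V(t,x)|_{g(t)}$ is smooth on $\mathcal{W}$.

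For the formula, let $\gamma: [0, L_0] \to M$ be the (fixed) unique unit-speed $g(t_0)$-minimizing geodesic from $o$ to $x_0$, with $L_0 := \rho(t_0, x_0)$. Set
\[
\ell(t) := L_{g(t)}(\gamma) = \int_0^{L_0} \sqrt{g(t)(\dot\gamma(s), \dot\gamma(s))}\, ds.
\]
Since $\gamma$ connects $o$ to $x_0$, we have $\rho(t, x_0) \leq \ell(t)$ for every $t$, with equality at $t = t_0$. Both functions being smooth in $t$ near $t_0$, the nonnegative function $\ell - \rho(\cdot, x_0)$ attains its minimum at $t_0$ and therefore has vanishing derivative there, yielding $\partial_t \rho(t_0, x_0) = \ell'(t_0)$. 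Differentiating under the integral and using $g(t_0)(\dot\gamma, \dot\gamma) \equiv 1$ gives
\[
\ell'(t_0) = \int_0^{L_0} \frac{\tfrac{\partial g}{\partial t}(t_0)(\dot\gamma, \dot\gamma)}{2\sqrt{g(t_0)(\dot\gamma, \dot\gamma)}}\, ds = \frac{1}{2} \int_0^{L_0} \frac{\partial g}{\partial t}(\dot\gamma(s), \dot\gamma(s))\, ds,
\]
which is the claimed identity.

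The main subtlety lies in the smoothness step, specifically in verifying that the minimizing property of the geodesic generated by $V(t,x)$ persists in a full $(t,x)$-neighborhood of $(t_0, x_0)$ --- that is, that $\mathcal{U}$ is open. This rests on joint continuous dependence of the cut distance along a given direction on $t$, which is in turn a consequence of smooth $t$-dependence of the geodesic flow together with the standard characterization of cut points via conjugate points and intersection of minimizing geodesics.
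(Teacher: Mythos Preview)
The paper does not supply its own proof of this lemma; it is quoted from McCann--Topping (Lemma~5 and Remark~6 there), so there is nothing in the present paper to compare against directly.

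Your argument is correct and is essentially the standard one. Smoothness via the implicit function theorem applied to the time-dependent exponential map is the natural route, and your derivative computation is a clean variant of the first variation of arc length: rather than differentiating the family of minimizing geodesics, you freeze the $g(t_0)$-geodesic $\gamma$, observe that $\ell(t)-\rho(t,x_0)\ge 0$ with equality at $t_0$, and conclude that the derivatives coincide there. This sidesteps any need to track how the minimizer moves with $t$. Your closing remark correctly isolates the only genuinely nontrivial ingredient --- openness of $\mathcal{U}$, equivalently closedness of the space-time cut locus $\Cutst$ --- which is exactly the fact the paper invokes (again citing McCann--Topping) in the paragraph immediately following the lemma.
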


Let $\Cutst$ be the space-time cut-locus defined by
\[
\Cutst
:=
\{
(t,x,y) \in [0,T] \times M \times M
\, | \,
(x,y) \in \Cut_{g(t)}
\}.
\]
It is shown in \cite{mccanntopping} that $\Cutst$ is a closed subset in $[0,T] \times M \times M$.
Though they assumed $M$ to be compact, extension to the noncompact case is straightforward.
Since $[0,T] \times \{ o \} \times \{ o \}$ is a compact subset in $[0,T] \times M \times M$ and it is away from $\Cutst$,
we can take $r_1 >0$ so that 
\begin{equation}\label{r1}
d_{g(t)} ( o , \Cut_{g(t)} (o) ) > r_1
\end{equation}
holds for all $t \in [0,T]$. Thus we can use \eqref{eq:Ito_preliminary}) when $X_t$ is in a small neighbourhood of $o$ until $X_t$ hits $o$.
Since $g(t)$ is smooth, Lemma~\ref{mct} and \eqref{eq:Ito_preliminary} together with the Laplacian comparison theorem imply the following by a standard argument:

\begin{lemma} \label{negligible} 
With probability one, $X_t$ never hits $o$.
\end{lemma}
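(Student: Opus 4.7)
The plan is to carry out the standard Bessel-type comparison argument from the fixed-metric case (cf.\ \cite[Theorem 3.5.3]{hsu}), adapted to the time-varying setting via Lemma~\ref{mct}. Only the case $d \geq 2$ is nontrivial; I focus on it. Fix $r \in (0, r_1)$ small enough that $Kr < \pi/2$ and $r \leq 1$, and for $\eps \in (0, r)$ (assuming $\rho(0, X_0) \in (\eps, r)$, which one may do after localisation) define
\[
\tau_\eps := \inf\{ t \in [0,T] : \rho(t, X_t) \leq \eps \}, \qquad \sigma_r := \inf\{ t \in [0,T] : \rho(t, X_t) \geq r \}.
\]
On $[0, \tau_\eps \wedge \sigma_r]$ the process $(t, X_t)$ stays in the region where $\rho$ is smooth (by \eqref{r1} and Lemma~\ref{mct}), so \eqref{eq:Ito_preliminary} applies. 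The Laplacian comparison theorem combined with $|\Sect_{g(t)}| \leq K^2$ yields
\[
\Delta_{g(s)} \rho \geq (d-1) K \cot(K \rho) \geq \frac{d-1}{\rho} - C_1 \rho,
\]
and Lemma~\ref{mct} together with the smoothness of $g$ on the compact manifold $M$ gives $|\partial_t \rho(t, x)| \leq C_0 \rho(t, x)$, both uniformly in $t \in [0,T]$.

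The main step is to apply It\^o's formula to $\phi(\rho(t, X_t))$, with $\phi(u) := -\log u$ when $d = 2$ and $\phi(u) := u^{2-d}$ when $d \geq 3$. Both choices satisfy the Euclidean harmonic identity $\tfrac{1}{2} \phi''(u) + \tfrac{d-1}{2u} \phi'(u) \equiv 0$, so the singular $1/\rho^2$ contributions to the drift cancel upon inserting the lower bound on $\Delta_{g(s)} \rho$. Together with the bound on $\partial_t \rho$ this leaves a drift majorised by $C_2 (\phi(\rho(t,X_t)) + 1)$ on $[0, \tau_\eps \wedge \sigma_r]$ (the linear-in-$\phi$ term is present only when $d \geq 3$). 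Stopping at $\tau_\eps \wedge \sigma_r$ makes the stochastic integral a genuine martingale, so Gr\"onwall's lemma yields
\[
\phi(\eps) \, P(\tau_\eps \leq T \wedge \sigma_r) \leq E\bigl[ \phi(\rho(T \wedge \tau_\eps \wedge \sigma_r, X_{T \wedge \tau_\eps \wedge \sigma_r})) \bigr] \leq C_3 \, e^{C_2 T}.
\]
Letting $\eps \to 0$ and using $\phi(\eps) \to +\infty$ (this is precisely where $d \geq 2$ is needed) shows that $X$ does not reach $o$ before time $T \wedge \sigma_r$.

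To extend this to all of $[0,T]$, I would invoke the strong Markov property over the at most countably many excursions of $\rho(t, X_t)$ into $[0, r)$ occurring in $[0,T]$; Lemma~\ref{density} ensures $X_s \neq o$ almost surely at the (possibly random) start of each excursion, so the above estimate applies excursion by excursion. The main technical obstacle is the drift computation for $\phi(\rho(t, X_t))$: the precise cancellation of the $1/\rho^2$ singularity requires the fundamental-solution choice of $\phi$ together with the matching leading order of $\Delta_{g(s)} \rho$ from Laplacian comparison, and the $\partial_t \rho$ contribution must be absorbed into the bounded (or linear-in-$\phi$) term. Everything else is routine.
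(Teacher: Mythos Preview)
Your proposal is correct and is precisely the ``standard argument'' the paper invokes without spelling out: the paper gives no proof of this lemma, saying only that it follows from Lemma~\ref{mct}, \eqref{eq:Ito_preliminary}, and the Laplacian comparison theorem, which are exactly the three inputs to your Bessel-type comparison. (Two cosmetic points: rename your curvature constant, since $C_1$ is already taken by \eqref{eq:Lipschitz}; and the appeal to Lemma~\ref{density} in the excursion step is unnecessary, because each excursion into $\{\rho < r\}$ begins, by construction of the relevant stopping time, at a point where $\rho$ equals a fixed positive threshold and hence $X \neq o$ automatically.)
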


For $x,y \in M$, let 
\[
\doo(x,y) := \sup_{t \in [0,T]} d_{g(t)}(x,y).
\]
We consider $[0,T] \times M \times M$ equipped with the distance function $\dd((s,x_1,x_2), (t,y_1,y_2)) := \max\{ |t-s|, \doo(x_1,y_1), \doo(x_2,y_2) \}$.
% Since $M$ is compact, 
% there exists a constant $C_1 < \infty$ 
% such that $\doo(x,y) \leq C_1 \du(x,y)$ 
% for all $x,y \in M$. 

By Lemma~\ref{mct} and the compactness of $M$, there exists a constant $C_1 >0$ such that 
\begin{equation} \label{eq:Lipschitz}
| d_{g(t)} (x,y) - d_{g(t')} (x,y) | \le C_1 |t - t'|
\end{equation} 
holds for any $t , t' \in [0,T]$ and $x,y \in M$. We now define a set $A$ by
\[
A 
:= 
\left\{ 
  ( t , x , y ) \in [0,T] \times M \times M 
  \, \left| \, 
  \begin{array}{l} 
    d_{g(t)} ( o , x ) \ge 2 i_M / 3 , 
    d_{g(t)} (o,y) = i_M / 3 
    \mbox{ and }
    \\
    d_{g(t)} (x,y) = d_{g(t)} (o,x) - d_{g(t)} (o,y)
  \end{array}
  \right.
\right\}.
\]
Note that $A$ is closed and hence compact since $d_{g(t)} (x,y)$ is continuous as a function of $(t,x,y)$.
Note that, for $(t,x,y) \in A$, $y$ is on a minimal $g(t)$-geodesic joining $o$ and $x$.
In particular, symmetry of the cutlocus implies that $A \cap \Cut_{ST} = \emptyset$. Thus we have 
\[
\delta_1 := \dd( A , \Cut_{ST} ) \wedge \frac{i_M}{3(C_1+1)} > 0.
\]
We define the function $\U: \R_+ \to \R_+$ by
\[
\U(r) := \frac{d-1}{2} K \coth \left( K \cdot r \wedge \frac{i_M}{3} \right) + 2 C_1.
\]
The Laplacian comparison theorem implies that, for all $( t , x , y ) \notin \Cutst$,
$
| ( \Delta_{g(t)} d_{g(t)} ( y , \cdot ) ) ( x ) | 
\leq 
(d-1) K \coth( K d_{g(t)} ( x , y ) ) 
$ 
and hence Lemma~\ref{mct} implies 
\begin{equation} \label{eq:dominant} 
\left|
    \frac{1}{2} ( \Delta_{g(t)} d_{g(t)} (y , \cdot ) ) (x)
    + 
    \frac{\partial }{\partial t} d_{g(t)} (y,x)
\right|
\leq 
\U ( d_{g(t)} ( x , y ) ).
\end{equation} 
\begin{lemma}\label{fund}
Let 
$(t_0, x_0) \in \Cut_{g(t_0)}(o)$ and $\delta \in (0, \delta_1 )$.
Let $X$ be a $g(t)$-Brownian motion starting at $x_0$ at time $t_0$. Let
$\tilde{T} := T \wedge ( t_0 + \delta ) \wedge \inf \{ t \geq t_0 \, | \, d_{g(t)} (x_0, X_t) = \delta \}$.
Then
\[
E \left[ \rho(t \wedge \tilde{T}, X_{t \wedge \tilde{T}}) - \rho(t_0, x_0) - \int_{t_0}^{t \wedge \tilde{T}} \U ( \rho ( s , X_s ) )  ds \right] \leq 0.
\]
\end{lemma}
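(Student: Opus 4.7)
The strategy, going back to Kendall, is to dominate the non-smooth function $\rho$ from above by a smooth function near $(t_0,x_0)$ via a triangle inequality, and then apply the classical It\^o formula of Lemma~\ref{ito2} to this comparison function. Concretely, because $x_0 \in \Cut_{g(t_0)}(o)$ the injectivity radius estimate gives $\rho(t_0,x_0) \geq i_M$; I would pick a minimizing unit-speed $g(t_0)$-geodesic $\gamma$ from $o$ to $x_0$ and set $y_0 := \gamma(i_M/3)$, so that $d_{g(t_0)}(o,y_0) = i_M/3$, $d_{g(t_0)}(y_0,x_0) = \rho(t_0,x_0) - i_M/3 \geq 2i_M/3$, and $(t_0,x_0,y_0) \in A$. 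Defining
\[
f(t,x) := d_{g(t)}(o,y_0) + d_{g(t)}(y_0,x),
\]
the triangle inequality gives $\rho(t,x) \leq f(t,x)$ with equality at $(t_0,x_0)$.

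Next I would show that $f$ is smooth along the random path $\{(s,X_s)\,:\,s\in[t_0,\tilde T]\}$. The bound $d_{g(s)}(x_0,X_s)\leq \delta$ from the definition of $\tilde T$, together with $|s-t_0|\leq \delta$ and \eqref{eq:Lipschitz}, yields $d_{g(s)}(y_0,X_s) \geq d_{g(t_0)}(y_0,x_0) - (C_1+1)\delta \geq 2i_M/3 - i_M/3 = i_M/3$, so $X_s \neq y_0$. Combined with $\dd(A,\Cutst) \geq \delta_1 > \delta$ and the closedness of $\Cutst$, a continuity argument then gives $(s,X_s,y_0) \notin \Cutst$ for every $s \in [t_0,\tilde T]$. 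This ensures both terms of $f$ are smooth in a neighborhood of each $(s,X_s)$, and in particular Lemma~\ref{ito2} is applicable.

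Applying Lemma~\ref{ito2} to $f$ I obtain
\[
f(t\wedge\tilde T, X_{t\wedge\tilde T}) - f(t_0,x_0) = \int_{t_0}^{t\wedge\tilde T} \left[\frac{\partial f}{\partial s} + \frac{1}{2}\Delta_{g(s)} f\right](s,X_s)\,ds + M_{t\wedge\tilde T},
\]
with $M$ a martingale. The drift splits as $\frac{d}{ds} d_{g(s)}(o,y_0)$ plus the It\^o-drift of $d_{g(s)}(y_0,X_s)$. The first summand is bounded by $C_1$ via \eqref{eq:Lipschitz}, and the second by $C_1 + \frac{d-1}{2}K\coth(K d_{g(s)}(X_s,y_0))$ via \eqref{eq:Lipschitz} and Laplacian comparison. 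Hence
\[
\text{drift}(s) \leq \frac{d-1}{2}K\coth(K d_{g(s)}(X_s,y_0)) + 2C_1 \leq \U(d_{g(s)}(X_s,y_0)),
\]
since $\coth$ is decreasing and $r\wedge i_M/3 \leq r$. Because both $d_{g(s)}(X_s,y_0) \geq i_M/3$ (shown above) and $\rho(s,X_s) \geq \rho(t_0,x_0) - (C_1+1)\delta \geq 2i_M/3 \geq i_M/3$, the function $\U$ equals its plateau value at both arguments, so $\U(d_{g(s)}(X_s,y_0)) = \U(\rho(s,X_s))$. Combining with $\rho \leq f$ and $f(t_0,x_0)=\rho(t_0,x_0)$, taking expectations, and using that $M$ is a bounded (hence genuine, mean-zero) martingale on the finite stopped interval, yields the claim.

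\textbf{Main obstacle.} The technically delicate step is propagating the closeness assumption built into $\tilde T$ (stated with the $g(s)$-distance) through the sup-metric $\doo$ used to define $\dd$ and $\delta_1$, so as to guarantee that $(s,X_s,y_0)$ never enters $\Cutst$. This is where the Lipschitz bound \eqref{eq:Lipschitz}, the choice of the constant $\delta_1 \leq i_M/(3(C_1+1))$, and the compactness of $A$ all play together; once this is in hand, the rest is a careful but standard drift computation using Lemma~\ref{mct} and the Laplacian comparison theorem.
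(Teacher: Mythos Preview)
Your proposal is correct and follows essentially the same route as the paper: the intermediate point $y_0$ is the paper's $\tilde o=\gamma(i_M/3)$, the comparison function $f$ is the paper's $\rho^+$, the cut-locus avoidance is argued from $(t_0,x_0,y_0)\in A$ and $\dd(A,\Cutst)\ge\delta_1$, and the drift is bounded via \eqref{eq:dominant} together with the observation that both $d_{g(s)}(X_s,y_0)$ and $\rho(s,X_s)$ exceed $i_M/3$ so that $\U$ is at its plateau value. The only cosmetic difference is that the paper invokes \eqref{eq:dominant} directly rather than re-deriving the two pieces of the drift separately.
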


\begin{proof}
We construct a point $\tilde{o} \in M$ as follows: we choose a minimizing unit-speed $g(t_0)$-geodesic $\gamma$ from $o$ to $x_0$ and define $\tilde{o} := \gamma(i_M/3)$.
Then by construction $( t_0 , x_0 , \tilde{o} ) \in A$. Moreover
%all $(t, x)$ with $d((t_0,x_0), (t,x)) < \delta$ satisfy $(t, x) \notin \Cut_{g(t)}(\tilde{o})$.
for all $t \in [t_0, \tilde{T}]$ we have $\dd((t_0, x_0, \tilde{o}), (t,X_t,\tilde{o})) < \delta_1$ and therefore $X_t \notin \Cut_{g(t)}(\tilde{o})$. Let
\[
\rho^+(t,x) := d_{g(t)} ( o, \tilde{o} ) + d_{g(t)}(\tilde{o}, x). %+ i_M/3.
\]
Since $\tilde{o}$ lies on a minimizing $g(t_0)$-geodesic from $o$ to $x_0$, we have $\rho^+(t_0,x_0) = \rho(t_0,x_0)$.
Moreover, by the triangle inequality, $\rho^+(t,x) \geq \rho(t,x)$ for all $(t,x)$. On $[t_0, \tilde{T}]$,
\begin{eqnarray}
\nonumber
d_{g(t)}( \tilde{o} , X_t ) & \geq & d_{g(t)} ( \tilde{o} , x_0 ) - d_{g(t)} ( x_0 , X_t )\\
\nonumber & \geq & d_{g(t_0)} ( \tilde{o} , x_0 ) - (1 + C_1) \delta\\
\label{eq:distant} & \geq & \frac{i_M}{3}
\end{eqnarray}
holds. By \eqref{eq:dominant} and Lemma~\ref{mct}, 
\[
\left( 
  \frac12 \Delta_{g(t)} \rho^+ 
  + 
  \frac{\partial \rho^+}{\partial t}
\right) 
(t,x)
\le 
\U ( \rho^+ (t,x) ) 
\]
holds if $(t,x,\tilde{o}) \notin \Cutst$. Note that $\U ( \rho^+ (t,X_t) ) = \U ( \rho (t,X_t) )$ holds
for all $t \in [t_0, \tilde{T}]$ since we can show $\rho( t , X_t ) \geq i_M / 3$ in a similar way as in \eqref{eq:distant}. Therefore
\begin{align*}
\rho(t \wedge \tilde{T}, X_{t \wedge \tilde{T}}) 
& 
- \rho(t_0, X_{t_0}) 
- \int_{t_0}^{t \wedge \tilde{T}} \U( \rho ( s , X_s ) ) ds
\\
& =  
\rho(t \wedge \tilde{T}, X_{t \wedge \tilde{T}}) 
- \rho^+(t_0, X_{t_0}) 
- \int_{t_0}^{t \wedge \tilde{T}} \U ( \rho^+ ( s , X_s ) ) ds
\\
& \leq 
\rho^+(t \wedge \tilde{T}, X_{t \wedge \tilde{T}}) 
- \rho^+(t_0, X_{t_0})
- \int_{t_0}^{t \wedge \tilde{T}} 
\left( 
  \frac{1}{2} \Delta_{g(s)} \rho^+ 
  + 
  \frac{\partial \rho^+}{\partial s} 
\right)
(s,X_s) 
ds.
\end{align*}
Since $\rho^+$ is smooth at $(t,X_t)$ for $t \in [t_0, \tilde{T}]$, 
the last term is a martingale. 
Hence the claim follows.
\end{proof}

For $\delta \in (0 , \delta_1)$, 
we define a sequence of stopping times $(S_n^\delta)_{n \in \N}$ 
and $(T_n^\delta)_{n \in \N_0}$ by 
\begin{eqnarray*}
T_0^\delta 
& := & 
0,
\\
S_n^\delta 
& := & 
T \wedge 
\inf \{ 
  t \geq T_{n-1}^\delta 
  \, | \, 
  X_t \in \Cut_{g(t)}(o) 
\},
\\
T_n^\delta 
& := & 
T \wedge 
(S_n^\delta + \delta) \wedge 
\inf \{ 
  t \geq S_n^\delta 
  \, | \, 
  d_{g(t)} (X_{S_n^\delta}, X_t) = \delta
\}. 
\end{eqnarray*}
Note that these are well-defined 
because $\Cutst$ and 
$\{ (t,x) \, | \, d_{g(t)} ( y , x ) = \delta \}$, where $y \in M$, 
are closed. 
\begin{proposition}\label{sup}
The process
\[
\rho(t, X_t) - \rho(0, X_0) - \int_0^t \U(s,X_s) ds
\]
is a supermartingale.
\end{proposition}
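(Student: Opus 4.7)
The plan is to establish the conditional-expectation form of the supermartingale property,
$E[\rho(t, X_t) - \rho(s, X_s) - \int_s^t \U(\rho(u, X_u))\, du \mid \mathcal{F}_s] \le 0$
for all $0 \le s \le t \le T$ (reading $\U(s,X_s)$ in the statement as shorthand for $\U(\rho(s, X_s))$, consistent with Lemma~\ref{fund}). The decomposition of $[s, t]$ will be along the stopping times $(S_n^\delta)$ and $(T_n^\delta)$ introduced above, for an arbitrary fixed $\delta \in (0, \delta_1)$ and with $T_0^\delta := s$; this alternates ``smooth'' intervals $[T_{n-1}^\delta, S_n^\delta]$ on which $X_u \notin \Cut_{g(u)}(o)$ with ``cut-locus'' intervals $[S_n^\delta, T_n^\delta]$ of length at most $\delta$.

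On a smooth interval, Lemma~\ref{negligible} keeps $X$ away from $o$, and by construction it avoids $\Cut_{g(\cdot)}(o)$, so Lemma~\ref{mct} delivers smoothness of $\rho$ along the path. It\^o's formula (Lemma~\ref{ito2}) applied to $\rho(\cdot, X_\cdot)$, combined with the drift bound \eqref{eq:dominant}, then shows that $\rho(\cdot, X_\cdot) - \int_{T_{n-1}^\delta}^{\cdot} \U(\rho(u, X_u))\, du$ is a supermartingale on $[T_{n-1}^\delta, S_n^\delta]$. On a cut-locus interval, the strong Markov property at $S_n^\delta$ reduces the analysis to Lemma~\ref{fund} with initial data $(S_n^\delta, X_{S_n^\delta})$, yielding
$E[\rho(u \wedge T_n^\delta, X_{u \wedge T_n^\delta}) - \rho(S_n^\delta, X_{S_n^\delta}) - \int_{S_n^\delta}^{u \wedge T_n^\delta} \U(\rho(r, X_r))\, dr \mid \mathcal{F}_{S_n^\delta}] \le 0$.

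Telescoping these two bounds over $n = 1, \dots, N$ by the tower property yields the desired inequality with $t$ replaced by $t \wedge T_N^\delta$. Since both $\rho$ and $\U \circ \rho$ are bounded on the compact manifold $M$, bounded convergence passes the inequality to the limit $N \to \infty$, provided $T_N^\delta \uparrow T$ a.s.

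The main obstacle is precisely this last assertion. If instead $T_N^\delta \uparrow T_\infty^\delta < T$ on an event of positive probability, then the increments $T_n^\delta - T_{n-1}^\delta$ tend to zero, while every non-trivial cut-locus interval either exhausts its full length $\delta$ or witnesses $X$ travelling a $g(\cdot)$-distance of $\delta$; both options contradict the uniform continuity of the continuous path $s \mapsto X_s$ on the compact manifold (using \eqref{eq:Lipschitz} to compare $g(s)$-distances). Hence $T_N^\delta \uparrow T$ a.s., and the telescoping limit closes the proof.
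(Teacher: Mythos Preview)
Your proposal is correct and follows essentially the same route as the paper's proof: decompose along the stopping times $(S_n^\delta)_n$ and $(T_n^\delta)_n$, apply \eqref{eq:Ito_preliminary} together with \eqref{eq:dominant} on the smooth intervals $[T_{n-1}^\delta, S_n^\delta]$ and Lemma~\ref{fund} (via the strong Markov property) on the cut-locus intervals $[S_n^\delta, T_n^\delta]$, telescope, and finish by showing $T_n^\delta \uparrow T$ through the same uniform-continuity contradiction using \eqref{eq:Lipschitz}. The only cosmetic difference is that the paper first reduces, via the strong Markov property, to deterministic initial data $(t_0,x_0)$ before running the decomposition, whereas you carry the conditioning on $\mathcal{F}_s$ throughout; this is equivalent.
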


\begin{corollary}
The process $\rho(t,X_t)$ is a semimartingale.
\end{corollary}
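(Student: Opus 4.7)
The plan is to obtain the semimartingale property essentially for free by rearranging the decomposition in Proposition~\ref{sup}. Specifically, write
\[
\rho(t, X_t) = \left[\rho(t, X_t) - \rho(0, X_0) - \int_0^t \U(\rho(s, X_s))\, ds\right] + \rho(0, X_0) + \int_0^t \U(\rho(s, X_s))\, ds.
\]
The term in square brackets is a continuous supermartingale by Proposition~\ref{sup}, and every continuous supermartingale is a semimartingale by the Doob--Meyer decomposition. The term $\rho(0, X_0)$ is $\F_0$-measurable and therefore contributes nothing to the stochastic behaviour. Finally, since $\U \geq 0$ by its very definition, the integral term is a continuous, adapted, non-decreasing process, hence of finite variation and thus a semimartingale. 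The sum of semimartingales is a semimartingale, so the conclusion follows in one line.

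The only technical point that needs to be checked is that the drift term $\int_0^t \U(\rho(s, X_s))\,ds$ is almost surely finite: $\U(r)$ has a singularity of order $r^{-1}$ as $r \downarrow 0$ coming from the $\coth$ factor, so a priori the time $X$ spends close to $o$ could produce a divergent integral. However, this is already implicit in the well-posedness of the supermartingale statement of Proposition~\ref{sup}; it follows from Lemma~\ref{negligible} (the process never hits $o$) together with the control of the radial process near $o$ that underlies the proof of that lemma. Since this finiteness is inherited from the earlier result, no further work is needed here, and the corollary is indeed a one-line consequence of Proposition~\ref{sup}.
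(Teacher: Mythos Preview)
Your argument is correct and is exactly the approach the paper has in mind: the Corollary is stated immediately after Proposition~\ref{sup} with no separate proof, precisely because the decomposition you wrote down makes it an immediate consequence (a continuous supermartingale is a semimartingale by Doob--Meyer, and the integral term is finite variation). Your remark on finiteness of $\int_0^t \U(\rho(s,X_s))\,ds$ is also well placed; in the paper's setting $M$ is taken compact and Lemma~\ref{negligible} guarantees $X$ never hits $o$, so by continuity $\rho(s,X_s)$ is bounded away from $0$ on $[0,T]$ and $\U(\rho(s,X_s))$ is pathwise bounded.
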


\begin{proof}[Proof of Proposition~\ref{sup}]
Thanks to the strong Markov property of Brownian motion it suffices to show that for all 
deterministic starting points $(t_0, x_0) \in [0,T] \times M$ and all $t \in [t_0, T]$
\[
E \left[ \rho(t, X_t) - \rho(t_0, X_{t_0}) - \int_{t_0}^t \U(\rho(s,X_s)) ds \right] \leq 0.
\]
To show this we first observe that thanks to Lemma~\ref{ito2}, \eqref{eq:dominant} and Lemma~\ref{fund} for all $n \in \N$
\[
E \left[ \rho(t \wedge S_n^\delta, X_{t \wedge S_n^\delta}) - \rho(t \wedge T_{n-1}^\delta, X_{t \wedge T_{n-1}^\delta})
- \int_{t \wedge T_{n-1}^\delta}^{t \wedge S_n^\delta} \U(\rho(s,X_s)) ds \, \Big| \, \F_{T_{n-1}^\delta} \right] \leq 0
\]
and
\[
E \left[ \rho(t \wedge T_n^\delta, X_{t \wedge T_n^\delta}) - \rho(t \wedge S_n^\delta, X_{t \wedge S_n^\delta})
- \int_{t \wedge S_n^\delta}^{t \wedge T_n^\delta} \U(\rho(s,X_s)) ds \, \Big| \, \F_{S_n^\delta} \right] \leq 0.
\]
It remains to show that $T_n \to T$ as $n \to \infty$.
If $\lim_{n \to \infty } T_n =: T_\infty < T$ occurs,
then $T_n^\delta - S_n^\delta$ converges to 0 as $n \to \infty$.
In addition,
$d_{g(t)} ( X_{S_n^\delta} , X_{T_n^\delta} ) = \delta$ must hold
for infinitely many $n \in \mathbb{N}$.
Take $N \in \mathbb{N}$ so large that $C_1 ( T_\infty -  T_n ) < \delta / 2$
for all $n \ge N$. Then \eqref{eq:Lipschitz} yields
$
d_{g(T_\infty)} ( X_{S_n^\delta} , X_{T_n^\delta} ) \ge \delta / 2 
$ 
for infinitely many $n \ge N$. 
But it contradicts with the fact 
that $X_t$ is uniformly continuous on $[0,T]$.  
Hence $T_n \to T$ as $n \to \infty$. 
\end{proof}

\begin{lemma} \label{null}
$
\lim_{\delta \to 0} 
\sum_{n=1}^\infty | T_n^\delta - S_n^\delta | 
= 0 
$
almost surely.  
\end{lemma}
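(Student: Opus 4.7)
\textbf{Proof plan for Lemma~\ref{null}.} The plan is to view the sum $\sum_n (T_n^\delta - S_n^\delta)$ as the Lebesgue measure of the ``excursion set'' $B_\delta := \bigcup_{n \ge 1}[S_n^\delta,T_n^\delta]\cap[0,T]$, and then to show that the indicator $\mathbf{1}_{B_\delta}$ converges pointwise to $0$ off the (negligible) set of cut-locus times, so that dominated convergence finishes the job.

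First I would observe that, by the very construction of the alternating stopping times, one has $T_n^\delta \le S_{n+1}^\delta$, so the intervals $[S_n^\delta,T_n^\delta]$ are pairwise disjoint. Hence
\[
\sum_{n=1}^\infty |T_n^\delta - S_n^\delta|
= \int_0^T \mathbf{1}_{B_\delta}(t)\,dt,
\]
and it suffices to show that $|B_\delta|\to 0$ almost surely as $\delta\to 0$. Note also that $T_n^\delta - S_n^\delta \le \delta$ by definition, which will be the crucial quantitative input.

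Next I would establish the key pointwise claim: fix $\omega$ and let $t \in [0,T]$ be such that $X_t(\omega) \notin \Cut_{g(t)}(o)$; then $\mathbf{1}_{B_\delta}(t)=0$ for all sufficiently small $\delta$. For this I would use that $\Cutst$ is closed in $[0,T]\times M\times M$ (as noted after Lemma~\ref{mct}). Together with the continuity of $s\mapsto (s,o,X_s)$, the hypothesis $(t,o,X_t)\notin\Cutst$ yields some $\varepsilon>0$ such that $X_s\notin \Cut_{g(s)}(o)$ for all $s\in(t-\varepsilon,t+\varepsilon)\cap[0,T]$. Now if $t\in B_\delta$, then $t\in [S_n^\delta,T_n^\delta]$ for some $n$, and because $T_n^\delta - S_n^\delta \le \delta$ we have $S_n^\delta\in[t-\delta,t]$; but $X_{S_n^\delta}\in\Cut_{g(S_n^\delta)}(o)$ by definition of $S_n^\delta$, which contradicts the choice of $\varepsilon$ as soon as $\delta<\varepsilon$.

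Finally, by Lemma~\ref{cutlocus} the set $C(\omega):=\{t\in[0,T]:X_t(\omega)\in \Cut_{g(t)}(o)\}$ has Lebesgue measure zero almost surely; thus, on this full-measure event, the above pointwise claim gives $\mathbf{1}_{B_\delta(\omega)}(t)\to 0$ for Lebesgue-a.e.\ $t\in[0,T]$. Since $\mathbf{1}_{B_\delta}\le \mathbf{1}_{[0,T]}$, dominated convergence applied to the Lebesgue integral (for each such $\omega$) yields $|B_\delta(\omega)|\to 0$, which is the desired conclusion. The main delicate point is the pointwise-off-$C$ argument in the previous paragraph; everything else is routine once that is in place, and the closedness of $\Cutst$ (which is explicitly recalled in the paper) is exactly what makes it work.
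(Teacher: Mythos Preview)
Your proof is correct and follows essentially the same route as the paper's: the paper introduces the $\delta$-neighborhood $E_\delta := \{t : \exists\,t'\text{ with }|t-t'|\le\delta,\ (t',X_{t'},o)\in\Cutst\}$ of the cut-locus times, observes $\bigcup_n[S_n^\delta,T_n^\delta]\subset E_\delta$ (which is exactly your ``$S_n^\delta$ is a cut-locus time within $\delta$ of $t$'' step), and uses closedness of $\Cutst$ together with Lemma~\ref{cutlocus} and monotone convergence to conclude. Your pointwise argument with dominated convergence is just a repackaging of the same ingredients; the only harmless imprecision is that the intervals $[S_n^\delta,T_n^\delta]$ may share endpoints (and $S_n^\delta=T$ need not be a cut-locus time), but this affects only a Lebesgue-null set and does not touch the argument.
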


\begin{proof}
For $\delta > 0$, 
let us define a random subset $E_\delta$ and $E$ 
in $[0, T]$ by  
\begin{align*}
E_\delta 
& : = 
\{ 
  t \in [ 0 , T ] 
  \, | \, 
  \mbox{there exists $t' \in [0,T]$ satisfying }
  |t - t'| \le \delta 
  \mbox{ and } 
  ( t' , X_{t'} , o ) \in \Cutst 
\} ,  
\\
E 
& : = 
\{ 
  t \in [ 0 , T ] 
  \, | \, 
  ( t , X_t , o ) \in \Cutst 
\} . 
\end{align*}  
Since 
the map $t \mapsto ( t , X_t , o )$ is continuous 
and 
$\Cutst$ is closed, 
$E$ is closed and hence $E = \cap_{ \delta > 0 } E_\delta$ holds. 
By the definition of $S_n^\delta$ and $T_n^\delta$, 
we have 
\[
E 
\subset 
\bigcup_{n=1}^\infty [ S_n^\delta , T_n^\delta ]  
\subset 
E_\delta
\] 
and hence the monotone convergence theorem implies  
\[
\lim_{\delta \to 0} 
\sum_{n=1}^\infty | T_n^\delta - S_n^\delta | 
\le   
\lim_{\delta \to 0} 
\int_0^T 1_{E_\delta} (t) dt 
= 
\int_0^T 1_E (t) dt    
= 0  
\]
almost surely, 
where the last equality follows 
from Corollary~\ref{cutlocus}. 
\end{proof}

\begin{lemma} \label{martingale}
The martingale part of $\rho(t,X_t)$ is 
\[
\sum_{i=1}^d \int_0^t (U_s e_i) \rho(s,X_s) d W_s^i.
\]
\end{lemma}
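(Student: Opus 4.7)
The plan is to identify, in the semimartingale decomposition $\rho(t,X_t)=\rho(0,X_0)+M_t+A_t$ afforded by the Corollary following Proposition~\ref{sup} (with $M$ a continuous local martingale and $A$ of locally bounded variation), the martingale part $M_t$ with $\beta_t:=\sum_{i=1}^d \int_0^t (U_s e_i)\rho(s,X_s)\,dW_s^i$. Equivalently, one must show that the continuous local martingale $N:=M-\beta$ vanishes identically.

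\textbf{Step 1: constancy of $N$ on the intervals $[T_{n-1}^\delta,S_n^\delta]$.} On the open interval $(T_{n-1}^\delta,S_n^\delta)$, the trajectory $X$ avoids both the reference point $o$ (by Lemma~\ref{negligible}) and the time-varying cut-locus $\Cut_{g(\cdot)}(o)$ (by the definition of $S_n^\delta$ as the first hitting time of this cut-locus), so $(s,x)\mapsto\rho(s,x)$ is smooth on an open space-time neighbourhood of the graph of $X$ restricted to this interval. A routine localization (for instance, stopping when $X$ comes within distance $\eta$ of the cut-locus and letting $\eta\downarrow 0$, or smoothly cutting off $\rho$ outside the good neighbourhood) reduces matters to Lemma~\ref{ito2} and yields
\[
\rho(t,X_t)-\rho(T_{n-1}^\delta,X_{T_{n-1}^\delta})=\beta_t-\beta_{T_{n-1}^\delta}+\int_{T_{n-1}^\delta}^{t}\left[\frac{1}{2}\Delta_{g(s)}\rho+\frac{\partial\rho}{\partial s}\right](s,X_s)\,ds
\]
first on $[T_{n-1}^\delta,S_n^\delta)$, and then at $t=S_n^\delta$ by continuity. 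By uniqueness of the semimartingale decomposition, $M_t-M_{T_{n-1}^\delta}=\beta_t-\beta_{T_{n-1}^\delta}$ throughout $[T_{n-1}^\delta,S_n^\delta]$, so $N$ is constant on each of these intervals, hence on $J_\delta:=\bigcup_n[T_{n-1}^\delta,S_n^\delta]$.

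\textbf{Step 2: conclusion via martingale representation.} Because $U$, and hence $X$, is constructed as the solution of a Stratonovich SDE driven by the $\R^d$-valued Brownian motion $W$, the process $N$ is a continuous local martingale with respect to the augmented filtration generated by $W$. The martingale representation theorem then yields predictable processes $\phi^i$ with $N_t=\sum_{i=1}^d\int_0^t\phi^i_s\,dW^i_s$ and $\langle N\rangle_t=\int_0^t|\phi_s|^2\,ds$ absolutely continuous in $t$. The constancy of $N$ on $J_\delta$ forces $|\phi|^2=0$ Lebesgue-a.e.\ on $J_\delta$; taking $\delta=1/k$ and letting $k\to\infty$, the complement $[0,T]\setminus\bigcup_k J_{1/k}$ is contained in $\{t:X_t\in\Cut_{g(t)}(o)\}$, which has Lebesgue measure zero by Lemma~\ref{cutlocus}. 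Hence $\phi\equiv 0$ a.e., and $N\equiv 0$.

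The main technical obstacle is the application of the smooth Itô formula in Step~1 at the random endpoints $T_{n-1}^\delta$ and $S_n^\delta$, which may themselves sit on the cut-locus; this is handled by the limiting argument sketched above, using continuity of $t\mapsto\rho(t,X_t)$. If one wishes to avoid the appeal to martingale representation in Step~2 (say, to allow a strictly larger filtration), an alternative is to estimate $\langle N\rangle_T$ directly: the $J_\delta$-contribution vanishes by Step~1, while the $I_\delta:=\bigcup_n[S_n^\delta,T_n^\delta]$-contribution is controlled using the time-Lipschitz bound~\eqref{eq:Lipschitz}, the $g(t)$-Lipschitz property of $\rho$ in space, and the quadratic variation of $X$, giving a bound of the form $\text{const}\cdot\sum_n(T_n^\delta-S_n^\delta)\to 0$ as $\delta\to 0$ by Lemma~\ref{null}.
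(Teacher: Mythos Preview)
Your argument is correct and follows essentially the same route as the paper: both proofs invoke the martingale representation theorem to realise $N$ as a stochastic integral against $W$, use the smooth It\^o formula~\eqref{eq:Ito_preliminary} on the good intervals $(T_{n-1}^\delta,S_n^\delta)$ to kill that part of $\langle N\rangle$, and then argue that the bad intervals $I_\delta=\bigcup_n[S_n^\delta,T_n^\delta]$ contribute nothing because their total length vanishes. The only cosmetic difference is that the paper bounds the $I_\delta$-contribution by $2\int_{I_\delta}(|\eta_t|^2+1)\,dt$ and invokes Lemma~\ref{null} directly, whereas your main argument instead takes the union $\bigcup_k J_{1/k}$ and observes that its complement is contained in $\{t:X_t\in\Cut_{g(t)}(o)\}$, appealing to Lemma~\ref{cutlocus}; your proposed alternative at the end is exactly the paper's version.
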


\begin{proof}
By the martingale representation theorem there exists an $\R^d$-valued process $\eta$ such that
the martingale part of $\rho(t,X_t)$ equals $\int_0^t \eta_s dW_s$. Let
\[
N_t := \int_0^t \eta_s dW_s - \sum_{i = 1}^d \int_0^t (U_s e_i) \rho(s,X_s) d W_s^i.
\]
Using the stopping times $S_n^\delta$ and $T_n^\delta$, the quadratic variation $\langle N \rangle_T$ of $N$ is expressed as follows; 
\begin{equation} \label{eq:quadratic}
\langle N \rangle_T 
= 
\sum_{i=1}^d 
\sum_{n=1}^\infty 
\left( 
    \int_{T_{n-1}^\delta \wedge T}^{S_n^\delta \wedge T} 
    | \eta_t^i - ( U_t e_i ) \rho ( t, X_t ) |^2 
    dt   
    + 
    \int_{S_n^\delta \wedge T}^{T_n^\delta \wedge T} 
    | \eta_t^i - ( U_t e_i ) \rho ( t, X_t ) |^2 
    dt 
\right) .
\end{equation}  
Since $X_t \notin \Cut_{g(t)} (o)$ if $t \in (T_{n-1}^\delta , S_n^\delta )$,  
It\^o's formula \eqref{eq:Ito_preliminary} yields 
\[
\int_{T_{n-1}^\delta \wedge T}^{S_n^\delta \wedge T} 
| \eta_t^i - ( U_t e_i ) \rho ( t, X_t ) |^2 
dt   
= 0 
\]
for $n \in \mathbb{N}$ and $i =1 , \cdots , d$. For the second term in the right-hand side of \eqref{eq:quadratic} we have
\[
\sum_{n=1}^\infty 
\int_{S_n^\delta \wedge T}^{T_n^\delta \wedge T}
| \eta_t^i - ( U_t e_i ) \rho ( t, X_t ) |^2
dt
\le
2 
\int_{\bigcup_{n=1}^\infty [ S_n^\delta , T_n^\delta ]}
\left(
    | \eta_t |^2
    +
    1
\right)
dt .
\]
Since $\eta_t$ is locally square-integrable on $[0,T]$ almost surely, Lemma~\ref{null} yields $\langle N \rangle_T =0$ and the conclusion follows.
\end{proof}

We can now conclude the proof of Theorem~\ref{itoradial}: 
Set 
$
I_\delta 
: = 
\bigcup_{n=1}^\infty 
[ S_n^\delta , T_n^\delta ]
$.
Set $L_t^\delta$ by 
\begin{align*}
L_t^\delta
& := 
- \rho(t,X_t) + \rho(0,X_0) 
 + 
\sum_{i=1}^d \int_0^t (U_s e_i) \rho(s,X_s) d W_s^i
\\
& \qquad 
+
\int_{[0,t] \setminus I_\delta}
\left[
    \frac{1}{2} \Delta_{g(t)} \rho 
    + 
    \frac{\partial \rho}{\partial s} 
\right] 
(s,X_s) 
ds
+  
\int_{[0,t] \cap I_\delta}
\U ( \rho ( s, X_s ) ) 
ds 
. 
\end{align*}
By Proposition~\ref{sup}, Lemma~\ref{martingale} and It\^o's formula~\eqref{eq:Ito_preliminary} on 
$[0,T] \setminus I_\delta$, 
$L^\delta_t$ is non-decreasing in $t$. 
In particular, 
$L^\delta_t$ can increase 
only when $t \in I_\delta$. 
Then we have 
\begin{align}
\nonumber
\rho(t, X_t) & - \rho(0, X_0) - \sum_{i = 1}^d \int_0^t (U_s e_i) \rho(s,X_s) d W_s^i
- \int_0^t \left[ \frac{1}{2} \Delta_{g(s)} \rho + \frac{\partial \rho}{\partial s} \right] (s, X_s) ds + L_t^\delta\\
\label{eq:error}
& = - \int_{[0,t] \cap I_\delta} \left[ \frac{1}{2} \Delta_{g(t)} \rho + \frac{\partial \rho}{\partial s} \right] (s,X_s) ds
- \int_{[0,t] \cap I_\delta} \U ( \rho ( s, X_s ) ) ds.
\end{align}
Since \eqref{eq:dominant} yields 
\begin{align*}
\left| 
    \int_{[0,t] \cap I_\delta} 
    \left[ 
        \frac{1}{2} \Delta_{g(t)} \rho 
        + 
        \frac{\partial \rho}{\partial s} 
    \right] 
    (s,X_s) 
    ds
    +  
    \int_{[0,t] \cap I_\delta}
    \U ( \rho ( s, X_s ) ) 
    ds 
\right| 
\le 
2 \int_{I_\delta} \U ( \rho ( s, X_s ) ) ds   
\end{align*}
and $\U ( \rho (s ,X_s) )$ is bounded on $I_\delta$, 
Lemma~\ref{null} yields that 
the right hand of \eqref{eq:error} converges to 0 
as $\delta \to 0$. 
Thus $L_t := \lim_{\delta \downarrow 0} L^\delta_t$ exists 
for all $t \in [0,T]$ almost surely 
and hence \eqref{eq:Itoradial} holds. 
We can easily deduce the fact that 
$L_t$ can increase only when $t \in \Cut_{g(t)} (o)$ 
from the corresponding property for $L^\delta_t$. \hfill $\Box$

%%%%%
%%%%%
\section{Proof of Theorem~\ref{mainresult}: Non-explosion of Brownian motion}

We define $k_1 \ge 1$ and $\bar{F} \: : \: [0,\infty) \to \R$ by
\begin{align*}
k_1 & := \inf \left\{ k \ge 1 \, | \, \Ric_{g(t)}(x) \geq -(d-1)k^2
  \mbox{ 
    for $t \in [0,T]$ 
    and $x \in M$ 
    with $\doo(o,x) \le r_1$}
\right\},
\\
\bar{F} (s) 
& := 
k_1 \coth ( k_1 \cdot s \wedge r_1 ) + k_1 \cdot s \wedge r_1,
\end{align*}
where $r_1$ is defined in \eqref{r1}.

Theorem~\ref{mainresult} follows immediately from the following estimate of the drift part of \eqref{eq:Itoradial}:

\begin{proposition} \label{bound}
Suppose \eqref{backwards}. 
Then, 
for all $(t,x) \in [0,T] \times M$ with $(t,x,o) \notin \Cut_{ST}$,
\[
\Delta_{g(t)} \rho (t,x) + 2 \frac{\partial \rho}{\partial t} (t,x) \leq \bar{F} ( \rho(t,x) ).
\]
\end{proposition}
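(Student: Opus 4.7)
The plan is to combine the index-form version of the Laplacian comparison theorem with the evolution formula of Lemma~\ref{mct} for $\partial_t\rho$, and to use the backwards super Ricci flow hypothesis \eqref{backwards} to produce a crucial cancellation between the two Ricci-curvature terms. This cancellation is what makes the resulting bound depend only on the behaviour of $\Ric$ in a small ball around $o$.

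Fix $(t,x)$ with $(t,x,o)\notin\Cutst$, write $\rho:=\rho(t,x)$, and let $\gamma\colon[0,\rho]\to M$ be the unique unit-speed minimising $g(t)$-geodesic from $o$ to $x$. By Lemma~\ref{mct} together with \eqref{backwards},
\[
2\,\partial_t\rho(t,x)=\int_0^{\rho}\frac{\partial g}{\partial t}(\dot\gamma,\dot\gamma)\,ds\leq\int_0^{\rho}\Ric_{g(t)}(\dot\gamma,\dot\gamma)\,ds.
\]
On the other hand, the classical index-form comparison---taking a parallel orthonormal frame $(e_i)_{i=1}^{d-1}$ along $\gamma$ perpendicular to $\dot\gamma$ and testing with $V_i(s)=\phi(s)e_i(s)$---yields, for any piecewise $C^1$ function $\phi\colon[0,\rho]\to\R$ with $\phi(0)=0$ and $\phi(\rho)=1$,
\[
\Delta_{g(t)}\rho(t,x)\leq(d-1)\int_0^{\rho}\phi'(s)^2\,ds-\int_0^{\rho}\phi(s)^2\Ric_{g(t)}(\dot\gamma,\dot\gamma)\,ds.
\]
Adding the two estimates produces the key inequality
\[
\Delta_{g(t)}\rho+2\,\partial_t\rho\leq(d-1)\int_0^{\rho}\phi'^{\,2}\,ds+\int_0^{\rho}\bigl(1-\phi^{2}\bigr)\Ric_{g(t)}(\dot\gamma,\dot\gamma)\,ds,
\]
in which the Ricci-integral term from Lemma~\ref{mct} has absorbed the $-\int\phi^2\Ric$ coming from the Hessian comparison.

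To turn this into the stated bound I would choose a test function that equals $1$ outside a neighbourhood of $s=0$, so that both integrals localise to the region where the Ricci lower bound defining $k_1$ is available. A natural choice is the Jacobi-field-like profile
\[
\phi(s)=\begin{cases}\sinh(k_1 s)\big/\sinh\bigl(k_1(\rho\wedge r_1)\bigr),&s\leq\rho\wedge r_1,\\1,&s>\rho\wedge r_1.\end{cases}
\]
With this choice both integrals on the right-hand side are supported in $[0,\rho\wedge r_1]$; in this range $\gamma(s)\in\{y\in M:d_{g(t)}(o,y)\leq r_1\}$, so by the very definition of $k_1$ the bound $\Ric_{g(t)}(\dot\gamma,\dot\gamma)\geq-(d-1)k_1^{\,2}$ applies. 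A direct computation of $\int_0^{\rho\wedge r_1}\phi'^{\,2}\,ds$ using $\cosh^2-\sinh^2=1$ produces the $\coth$-term of $\bar F$, while estimating the Ricci integral on $[0,\rho\wedge r_1]$ produces the linear correction $k_1(\rho\wedge r_1)$.

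The delicate point is the upper estimate of $\int_0^{\rho\wedge r_1}(1-\phi^{2})\Ric_{g(t)}(\dot\gamma,\dot\gamma)\,ds$. Because $0\leq 1-\phi^{2}\leq 1$, the lower Ricci bound $\Ric\geq-(d-1)k_1^{\,2}$ controls this integral only from below; in order to bound it from above one must additionally use the upper bound on $\Ric$ that is automatically available on the relatively compact set $[0,T]\times\{\doo(o,\cdot)\leq r_1\}$ coming from the smoothness of $(t,x)\mapsto g(t,x)$. The combinatorial bookkeeping of how these two Ricci bounds combine with the explicit $\coth$-contribution coming from $\phi'^{\,2}$ is routine but is what ultimately produces exactly the function $\bar F(\rho)$ defined in the statement.
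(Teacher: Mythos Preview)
Your approach is correct and runs parallel to the paper's, with one technical difference worth noting. Rather than the explicit $\sinh$ profile, the paper takes $\phi(s)=G(s)/G(\rho)$ where $G$ solves $G''=-\tfrac{1}{d-1}\Ric_{g(t)}(\dot\gamma,\dot\gamma)\,G$ along $\gamma$ with $G(0)=0$, $G'(0)=1$; the index-form estimate then collapses to $(d-1)G'(\rho)/G(\rho)$ and, combined with \eqref{backwards} and Lemma~\ref{mct}, gives
\[
\Delta_{g(t)}\rho+2\,\partial_t\rho\ \le\ F(\rho):=(d-1)\Bigl[\frac{G'(\rho)}{G(\rho)}-\int_0^{\rho}\frac{G''}{G}\,ds\Bigr].
\]
The payoff of this choice is that $F'(r)=-(d-1)\bigl(G'(r)/G(r)\bigr)^2<0$, so $F$ is \emph{strictly decreasing} along the geodesic and one obtains $F(\rho)\le F(\rho\wedge r_1)$ for free, without having to build the localisation into $\phi$ by hand. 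Your $\sinh$ test function achieves the same localisation but loses this clean monotonicity, which is why you are forced to estimate the residual $\int_0^{\rho\wedge r_1}(1-\phi^2)\Ric\,ds$ directly. You correctly identify that this step needs an upper bound on $\Ric$ over the compact set $[0,T]\times\{\doo(o,\cdot)\le r_1\}$, which is indeed available by smoothness; note however that the resulting constant then involves that upper bound and not only $k_1$, so your computation will not produce ``exactly $\bar F$'' as written but rather a function of the same shape with possibly larger constants. For the application to non-explosion this is immaterial---any locally bounded dominating function suffices---and in fact the paper's own last step (bounding $F(\rho\wedge r_1)$) implicitly requires the same upper Ricci bound on the small ball.
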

To prove this proposition it suffices to show the following lemma: 
\begin{lemma} \label{bound_geodesic}
Suppose \eqref{backwards}. 
Fix $t \in [0,T]$ and a minimizing unit-speed $g(t)$-geodesic $\gamma: [0,b] \to M$ with $\gamma(0) = 0$.
Then there exists a non-increasing function $F \: : \: (0, b) \to \R$ satisfying $F(s) \le \bar{F} (s)$ and
\[
\Delta_{g(t)} \rho(t, \gamma (s)) + 2 \frac{\partial \rho}{\partial t} (t, \gamma(s)) \leq F(s)
\]
for all $s \in (0,b)$.
\end{lemma}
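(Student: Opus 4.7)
The plan is to combine the integral representation of $\partial_t \rho$ from Lemma~\ref{mct} with the classical second-variation (index-form) upper bound for $\Delta_{g(t)} \rho$ along the minimizing geodesic $\gamma$, choosing the test function so that, under backwards super Ricci flow, the two resulting Ricci integrals nearly cancel on the region where no effective Ricci lower bound has been assumed. Since every sub-segment $\gamma|_{[0,s]}$ is itself a minimizing unit-speed $g(t)$-geodesic and $(t,\gamma(s),o) \notin \Cutst$ by the hypothesis of Proposition~\ref{bound}, it suffices to prove the claimed estimate at the endpoint $\gamma(s)$ for each $s \in (0,b)$. Lemma~\ref{mct} applied to $\gamma|_{[0,s]}$ together with \eqref{backwards} yields at once
\[
2\frac{\partial \rho}{\partial t}(t,\gamma(s)) = \int_0^s \frac{\partial g}{\partial t}(\dot{\gamma}(u),\dot{\gamma}(u))\,du \leq \int_0^s \Ric_{g(t)}(\dot{\gamma}(u),\dot{\gamma}(u))\,du.
\]

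Next I would invoke the index lemma: given a parallel $g(t)$-orthonormal frame $\{E_1,\ldots,E_{d-1}\}$ along $\gamma$ perpendicular to $\dot{\gamma}$ and any Lipschitz $\phi : [0,s] \to \R$ with $\phi(0)=0$ and $\phi(s)=1$, the variations $u \mapsto \phi(u)E_i(u)$ are admissible and the sum of their indices gives
\[
\Delta_{g(t)} \rho(t,\gamma(s)) \leq (d-1) \int_0^s (\phi')^2\,du - \int_0^s \phi^2\,\Ric_{g(t)}(\dot{\gamma},\dot{\gamma})\,du.
\]
Adding the two bounds reduces the whole problem to controlling $(d-1)\int_0^s (\phi')^2\,du + \int_0^s (1-\phi^2)\Ric_{g(t)}(\dot{\gamma},\dot{\gamma})\,du$ by a suitable choice of $\phi$. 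Setting $\sigma := s \wedge r_1$, I would take $\phi(u) := \sinh(k_1 u)/\sinh(k_1 \sigma)$ on $[0,\sigma]$ and $\phi \equiv 1$ on $[\sigma,s]$. On $[\sigma,s]$ the weight $1-\phi^2$ vanishes, killing the far-field Ricci contribution; on $[0,\sigma]$ the Jacobi-type identity $\phi''=k_1^2\phi$ together with integration by parts produces $\int_0^\sigma(\phi')^2\,du = k_1\coth(k_1\sigma) - k_1^2 \int_0^\sigma \phi^2\,du$, so the right-hand side rewrites as
\[
(d-1)k_1\coth(k_1\sigma) + \int_0^\sigma \Ric_{g(t)}(\dot{\gamma},\dot{\gamma})\,du - \int_0^\sigma \phi^2\bigl[(d-1)k_1^2 + \Ric_{g(t)}(\dot{\gamma},\dot{\gamma})\bigr]\,du;
\]
the integrand of the last term is pointwise non-negative by the definition of $k_1$ and may be dropped.

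After this rearrangement I arrive at
\[
\Delta_{g(t)}\rho(t,\gamma(s)) + 2\frac{\partial \rho}{\partial t}(t,\gamma(s)) \leq (d-1)k_1\coth(k_1\sigma) + \int_0^\sigma \Ric_{g(t)}(\dot{\gamma},\dot{\gamma})\,du,
\]
and the remaining Ricci integral is bounded in absolute value by a constant multiple of $r_1$ using the lower bound defining $k_1$ together with a coarse upper bound from the sectional curvature estimate $|\Sect_{g(t)}| \leq K^2$. Since $\coth$ is decreasing and $\sigma = s \wedge r_1$ saturates at $r_1$, both pieces of this upper bound are non-increasing in $s$, so defining $F(s)$ to be this right-hand side (or, if needed, its non-increasing envelope) supplies a non-increasing function satisfying $F(s) \leq \bar{F}(s)$ after comparing constants. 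I expect the main obstacle to be precisely the test-function design: $\phi$ must simultaneously annihilate the uncontrolled far-field Ricci contribution (forced by $\phi \equiv 1$ on $[\sigma,s]$) and, on the near field $[0,\sigma]$, realize the Jacobi comparison function for the model curvature $-k_1^2$ so that the hyperbolic-sine identity meshes cleanly with the Ricci lower bound; once this choice is made the remaining computation is essentially bookkeeping.
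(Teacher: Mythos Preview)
Your argument is correct and lands on the same final estimate as the paper, but the route differs in the choice of test function for the index lemma. You take the piecewise profile $\phi(u)=\sinh(k_1 u)/\sinh(k_1\sigma)$ on $[0,\sigma]$ and $\phi\equiv 1$ on $[\sigma,s]$, which has the virtue of explicitly annihilating the far-field Ricci weight $1-\phi^2$; the price is that the raw bound $(d-1)k_1\coth(k_1\sigma)+\int_0^{\sigma}\Ric_{g(t)}(\dot\gamma,\dot\gamma)\,du$ is not itself monotone in $s$ on $(0,r_1)$ (your sentence ``both pieces are non-increasing'' is not literally true for the Ricci integral before you replace it by its constant upper bound), so you must pass to that constant majorant to obtain a non-increasing $F$. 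The paper instead uses $Y_i(u)=\bigl(G(u)/G(r)\bigr)X_i(u)$ with $G$ solving $G''=-\tfrac{1}{d-1}\Ric_{g(t)}(\dot\gamma,\dot\gamma)\,G$, $G(0)=0$, $G'(0)=1$; this gives $\Delta_{g(t)}\rho\le (d-1)G'(r)/G(r)$ and, after adding the $\partial_t\rho$ contribution, the intrinsic function $F(r)=(d-1)\bigl[G'(r)/G(r)-\int_0^r G''/G\bigr]$, for which one computes $F'(r)=-(d-1)\bigl(G'(r)/G(r)\bigr)^2<0$, so the non-increasing property is an exact identity rather than a consequence of a crude replacement. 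The near-field Ricci lower bound is then only invoked at the very end, via the Sturm comparison $G'/G\le k_1\coth(k_1\,\cdot)$ on $[0,r_1]$. Both approaches thus converge to the same expression $(d-1)k_1\coth(k_1(s\wedge r_1))+\int_0^{s\wedge r_1}\Ric_{g(t)}(\dot\gamma,\dot\gamma)$ and finish the comparison with $\bar F$ identically; the paper's choice buys the clean monotonicity identity, while yours buys a more transparent localization of the Ricci contribution.
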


\begin{proof}
Let $( X_i )_{i=1}^d$ be orthonormal parallel fields along $\gamma$ with $X_1 = \dot{\gamma}$. Fix $r \in (0, b)$, and let %$x := \gamma(b)$. Let
$J_i$ be the Jacobi field along $\gamma |_{[0,r]}$ with $J_i(0) = 0$ and $J_i(r) = X_i(r)$. 
Then it is well known 
(see \cite{chavel} for example) 
that
\begin{align*}
( \Delta_{g(t)}  d_{g(t)} (\gamma (0), \cdot ) ) (\gamma(r))
& = \sum_{i=2}^d I(J_i, J_i),
\end{align*}
where the index form $I$ for smooth vector fields $Y,Z$ along $\gamma |_{[0,r]}$ is defined by 
\begin{align*}
I ( Y , Z )
& : =  
\int_0^r 
\left( 
  \langle \dot{Y}(s) , \dot{Z}(s) \rangle_{g(t)} 
  - 
  \langle 
    R_{g(t)} ( Y (s), \dot{\gamma}(s) ) \dot{\gamma}(s), Z(s)
  \rangle_{g(t)}
\right) 
ds.
\end{align*}
Let $G: [0,b] \to \R$ be the solution to the initial value problem 
\[
\begin{cases}
\displaystyle
G''(s) 
= 
- 
\frac{ 
  \Ric_{g(t)} (\dot{\gamma}(s) , \dot{\gamma}(s))
}
{d-1} 
G(s), 
\\
\displaystyle 
G(0) = 0, G'(0) = 1 . 
\end{cases}
\]
Then we have
\begin{align}
\nonumber
\sum_{i=2}^d I(G X_i, G X_i)
& =  
\sum_{i=2}^d 
\int_0^r 
\left[ 
  \left| G'(s) X_i(s) \right|^2 
  - 
  \langle 
    R( G(s) X_i(s), \dot{\gamma}(s) ) \dot{\gamma}(s), G(s) X_i(s) 
  \rangle 
\right]
ds
\\
\nonumber
& =
\int_0^r 
\left[ 
  (d-1) G'(s)^2
  - G(s)^2 \Ric(\dot{\gamma}(s), \dot{\gamma}(s))
\right] 
ds
\\
\nonumber
& =
(d-1) \int_0^r \left[ G'(s)^2 + G(s) G''(s) \right] ds
\\
\label{eq:index1}
& = 
(d-1) G(r) G'(r). 
\end{align} 
Since 
$\gamma (0)$ has no conjugate point along $\gamma$ 
on $[0,r]$, 
the left hand side of \eqref{eq:index1} must be strictly positive
(see Theorem~2.10 in \cite{chavel}). 
It follows that $G(r) > 0$ for all $r \in (0,b)$. 
Now let $Y_i(s) := \frac{G (s)}{G (r)} X_i(s)$. 
Note that $Y_i$ has the same boundary values as $J_i$.
Therefore, by the index lemma,
\begin{equation*}
( \Delta_{g(t)} d_{g(t)} ( \gamma (0), \cdot ) ) ( \gamma (r) )
\leq  
\sum_{i=2}^d I(Y_i, Y_i) 
= 
\frac{(d-1) G'(r)}{G(r)}. 
\end{equation*}
Hence Lemma~\ref{mct} and \eqref{backwards} yield 
\begin{eqnarray*}
(\Delta \rho + 2 \frac{\partial \rho}{\partial t})(t, \gamma(r)) & \leq & \frac{(d-1) G'(r)}{G(r)} + \int_0^r \Ric(\dot{\gamma}(s), \dot{\gamma}(s)) ds\\
& = & (d-1) \left[ \frac{G'(r)}{G(r)} - \int_0^r \frac{G''(s)}{G(s)} ds \right]\\
& =: & F(r).
\end{eqnarray*}
Since
\begin{equation*}
F'(r) 
= 
(d-1) \left[ \frac{G(r) G''(r) - G'(r)^2}{G(r)^2} - \frac{G''(r)}{G(r)} \right]
= 
- \frac{(d-1) G'(r)^2}{G(r)^2}  < 0 ,  
\end{equation*} 
$F$ is decreasing. 
In particular, we have $F (r) \le F ( r \wedge r_1 )$. 
A usual comparison argument implies 
$G' ( r \wedge r_1 ) / G ( r \wedge r_1 ) 
\le 
k_1 \coth ( k_1 \cdot r \wedge r_1 )$ 
and hence the conclusion follows. 
\end{proof}

\section{Generalization to non-symmetric diffusion}\label{nonsymmetric}

We generalise the previous results on more general setting 
including the case for non-symmetric diffusions. 
Let $X_t$ be time-dependent diffusion whose generator is 
$\Delta_{g(t)} / 2 + Z(t)$, 
where $Z(t)$ is a time-dependent vector field on $M$ 
which is smooth on $[0,T] \times M$. 

Even in this case, 
Theorem~\ref{itoradial} still holds by replacing 
$\Delta_{g(t)} / 2$ with $\Delta_{g(t)} / 2 + Z (t)$. 
In what follows, we briefly mention the proof. 
Except for Lemma~\ref{density} and Lemma~\ref{cutlocus}, 
an extension of each assertion is straightforward. 
For Lemma~\ref{cutlocus}, some difficulties 
come from the fact that the result corresponding 
to Lemma~\ref{density}, 
especially the existence of a fundamental solution, 
is not yet known at this moment 
for non-symmetric diffusions. 
But, for our purpose, it suffices to show the following: 
\begin{lemma}
Suppose that $M$ is compact. 
Then $P^x [ X_t \in \Cut_{g(t)} (o) ] =0$.  
\end{lemma}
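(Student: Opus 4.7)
The strategy is to reduce the non-symmetric case to the symmetric one by a Girsanov change of measure, thereby avoiding the need for a fundamental solution of the non-symmetric heat equation. Since $M$ is compact and $Z$ is smooth on $[0,T] \times M$, the quantity $|Z(t,x)|_{g(t)}$ is bounded on $[0,T] \times M$; this boundedness is the only input needed beyond what was already used in the symmetric case.

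First, I would realize $X_t$ as the projection of a horizontal process $U_t$ on $\F(M)$ satisfying the Arnaudon--Coulibaly--Thalmaier SDE augmented by a drift $\tilde{Z}(t, U_t)\,dt$, where $\tilde{Z}$ is the $g(t)$-horizontal lift of $Z$. Since $U_t$ is $g(t)$-orthonormal, one has $\tilde{Z}(t, U_t) = \sum_{i=1}^d a_i(t)\, H_i(t, U_t)$ with $a_i(t) := \langle Z(t, X_t), U_t e_i \rangle_{g(t)}$, and the $\R^d$-valued process $a$ is bounded. Novikov's condition being trivial, Girsanov's theorem yields a probability measure $Q$ on $\F_T$, equivalent to $P^x$, under which
\[
\tilde{W}_t := W_t + \int_0^t a(s)\,ds
\]
is a standard $\R^d$-valued Brownian motion. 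Substituting $\circ dW^i_t = \circ d\tilde{W}^i_t - a_i(t)\,dt$ into the SDE for $U_t$ cancels the drift $\tilde{Z}(t, U_t)\,dt$ exactly, so under $Q$ the process $U_t$ satisfies the original horizontal SDE driven by $\tilde{W}$, and consequently $X_t = \pi U_t$ is a $g(t)$-Brownian motion under $Q$.

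Lemma~\ref{density}, whose proof relies only on the existence of a fundamental solution for the symmetric heat equation (available from Guenther), then applies under $Q$ and tells us that the law of $X_t$ under $Q$ is absolutely continuous with respect to the volume measure. Since $\Cut_{g(t)}(o)$ has $g(t)$-volume zero, we obtain $Q[X_t \in \Cut_{g(t)}(o)] = 0$, and the equivalence $P^x \sim Q$ on $\F_T$ upgrades this to $P^x[X_t \in \Cut_{g(t)}(o)] = 0$, as desired.

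The only step requiring genuine care is the clean setup of the horizontal SDE on $\F(M)$ with the additional drift $\tilde{Z}$ and the verification that the vertical correction term involving $\partial g/\partial t$ is untouched by the Girsanov transform. Since $\tilde{Z}$ is purely horizontal and $\int_0^{\cdot} a(s)\,ds$ is of finite variation, both points are immediate; no genuine obstacle arises.
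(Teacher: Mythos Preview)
Your proposal is correct and follows essentially the same route as the paper: both remove the drift $Z$ by a Girsanov change of measure so that $X$ becomes a $g(t)$-Brownian motion under the new measure, and then invoke Lemma~\ref{density} together with the fact that $\Cut_{g(t)}(o)$ has measure zero. The only cosmetic differences are that the paper phrases the Girsanov martingale intrinsically as the martingale part of the stochastic line integral of the dual $1$-form $\hat Z(t)$ along $X$ (which unwinds to your $\int_0^\cdot a(s)\,dW_s$), and that the paper passes from $\tilde P^x$ back to $P^x$ via a Cauchy--Schwarz estimate rather than invoking equivalence of the two measures directly; your use of equivalence is equally valid since the drift is bounded.
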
 
\begin{proof}
Let $\hat{Z}(t)$ be a differential 1-form 
corresponding to $Z(t)$ 
by duality with respect to $g(t)$. 
Let $M^Z_t$ be the martingale part of 
the stochastic line integral of 
$\hat{Z} (t)$ along $X_t$. 
Note that there is a constant $c > 0$ 
such that 
$\langle M^Z \rangle_t \le c t$ holds  
since $M$ is compact. 
Let us define a probability measure $\tilde{P}^x$ 
on the same probability space as $P^x$ 
by 
$
\tilde{P}^x [ A ] 
: = 
E^x 
[ 
  \exp (- M^Z - \langle M^Z \rangle_t / 2 ) 
  1_A 
]
$. 
By the Girsanov formula, the law of $X_t$ 
under $\tilde{P^x}$ coincides with 
that of $g(t)$-Brownian motion at time $t$. 
The Schwarz inequality yields 
\begin{align*} 
P^x [ X_t \in \Cut_{g(t)} (o) ] 
& \le 
E^x 
\left[ 
  \mathrm{e}^{-M^Z_t - \langle M^Z \rangle_t / 2} 
  1_{\{ X_t \in \Cut_{g(t)} (o) \} } 
\right]^{1/2}    
E^x 
\left[ 
  \mathrm{e}^{M^Z_t + \langle M^Z \rangle_t / 2} 
\right]^{1/2}
\\
& \le 
\tilde{P}^x 
\left[ 
  X_t \in \Cut_{g(t)} (o) 
\right]^{1/2} 
\mathrm{e}^{ct/2}. 
\end{align*}  
Hence the conclusion follows from Lemma~\ref{density}. 
\end{proof}
To state an extension of Theorem~\ref{mainresult}, 
define a tensor field $( \nabla Z(t) )^\flat$ by 
\[
( \nabla Z (t) )^\flat (X,Y) 
:= 
\frac12 
\left( 
    \langle 
      \nabla_X Z (t) , Y 
    \rangle_{g(t)} 
    + 
    \langle 
      \nabla_Y Z (t) , X 
    \rangle_{g(t)} 
\right) 
,
\]
where $\nabla$ is the Levi-Civita connection with respect to $g(t)$. 
\begin{assump} \label{non-symmetric}
There exists a locally bounded measurable function $b$ on $[0,\infty)$ so that 
\begin{enumerate}
\item
$
( \nabla Z (t,x) )^\flat +  \partial_t g (t,x) 
\le 
\Ric_{g(t)}(x) + b ( \rho ( t , x ) ) g(t,x)
$      
for all $t \in (0,T)$ and all $x \in M$.
%where $\pi$ is a canonical projection to $M$. 
%
\item 
The 1-dimensional diffusion process $y_t$ given by
$
d y_t 
= 
d \beta_t 
+ 
\left( 
    \bar{F} ( y_t ) 
    + 
    \int_0^{y_t} b ( s ) ds 
\right) 
dt
$ 
does not explode. (This is the case if and only if
\[
\int_1^\infty \exp \left[ -2 \int_1^y \bbb(z) dz \right] \left\{ \int_1^y \exp \left[ 2 \int_1^z \bbb(\xi) d \xi \right] dz \right\} dy = \infty,
\]
where $\bbb(y) := \bar{F}(y) + \int_0^{y} b(s) ds$,
see e.g.~\cite[Theorem~6.50]{hackenbrochthalmaier} or \cite[Theorem~VI.3.2]{ikedawatanabe}.)
%%% we can write it more explicitly 
\end{enumerate}
\end{assump}

Once we obtain the following, non-explosion of $X_t$ 
follows in the same way as above 
by the comparison argument.
\begin{lemma} 
Suppose that Assumption~\ref{non-symmetric} holds. 
Fix $t \in [0,T]$ and a minimizing unit-speed $g(t)$-geodesic $\gamma: [0,b] \to M$ with $\gamma(0) = 0$.
Then there exists a constant $C_Z > 0$ depending only on $\{ Z(t) \}_{t \in [0,T]}$ and $\gamma(0)$ such that 
\[
( ( \Delta_{g(t)} + Z(t) ) d_{g(t)} ( \gamma (0), \cdot ) ) ( \gamma (s) ) 
+ 
2 \frac{\partial }{\partial t} d_{g(t)} (\gamma(0), \gamma(s)) 
\leq 
C_Z + F(s) 
+ 
\int_0^s b ( d_{g(t)} ( \gamma (0) , \gamma (u) ) ) du 
\] 
for all $s \in (0,b)$, where $F$ is the same function as appeared in Lemma~\ref{bound_geodesic}.
\end{lemma}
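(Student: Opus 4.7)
The plan is to follow the argument of Lemma~\ref{bound_geodesic}, treating the new drift contribution $Z(t)\rho$ as a boundary-plus-line-integral term along $\gamma$ which combines cleanly with $\partial_t g$ thanks to Assumption~\ref{non-symmetric}(1). Because $\gamma|_{[0,s]}$ is minimizing, $\gamma(s)\notin\Cut_{g(t)}(\gamma(0))$, so $\rho(t,\cdot)$ is smooth at $\gamma(s)$ and its $g(t)$-gradient there equals $\dot\gamma(s)$; hence
\[
Z(t)\rho(t,\gamma(s)) \;=\; \langle Z(t,\gamma(s)),\dot\gamma(s)\rangle_{g(t)}.
\]

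Set $h(u):=\langle Z(t,\gamma(u)),\dot\gamma(u)\rangle_{g(t)}$. Since $\gamma$ is a $g(t)$-geodesic, $\nabla_{\dot\gamma}\dot\gamma=0$, and metric compatibility of the Levi-Civita connection of $g(t)$ yields
\[
h'(u) \;=\; \langle \nabla_{\dot\gamma}Z(t),\dot\gamma\rangle_{g(t)} \;=\; (\nabla Z(t))^\flat(\dot\gamma(u),\dot\gamma(u)).
\]
Integrating from $0$ to $s$ and combining with Lemma~\ref{mct} gives
\[
Z(t)\rho(t,\gamma(s)) + 2\frac{\partial\rho}{\partial t}(t,\gamma(s))
\;=\; \langle Z(t,\gamma(0)),\dot\gamma(0)\rangle_{g(t)}
+ \int_0^s \bigl[(\nabla Z(t))^\flat + \partial_t g\bigr](\dot\gamma,\dot\gamma)\,du.
\]
By Assumption~\ref{non-symmetric}(1) the integrand is bounded above by $\Ric_{g(t)}(\dot\gamma,\dot\gamma) + b(\rho(t,\gamma(u)))$, while the Cauchy--Schwarz inequality and $|\dot\gamma(0)|_{g(t)}=1$ bound the boundary term by $C_Z := \sup_{t\in[0,T]}|Z(t,\gamma(0))|_{g(t)}$, a quantity that is finite (by smoothness of $Z$ and compactness of $[0,T]$) and depends only on $\{Z(t)\}_{t\in[0,T]}$ and $\gamma(0)$.

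It remains to control $\Delta_{g(t)}\rho(t,\gamma(s))$, and here the argument is identical to the one in Lemma~\ref{bound_geodesic}: the index-lemma comparison with the same function $G$ gives $\Delta_{g(t)}\rho(t,\gamma(s))\le (d-1)G'(s)/G(s)$, which the ODE defining $G$ rewrites as $F(s) - \int_0^s \Ric_{g(t)}(\dot\gamma,\dot\gamma)\,du$. Adding this to the estimate just obtained, the Ricci integrals cancel and we arrive at the desired bound $C_Z + F(s) + \int_0^s b(\rho(t,\gamma(u)))\,du$. The only slightly delicate step in the whole argument is the opening identity $Z(t)\rho=\langle Z,\dot\gamma\rangle$, which relies on $\rho(t,\cdot)$ being smooth with gradient $\dot\gamma(s)$ at $\gamma(s)$; this is guaranteed by the hypothesis that $\gamma|_{[0,s]}$ is minimizing and so requires no extra work.
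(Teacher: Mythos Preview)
Your proof is correct and follows essentially the same approach as the paper: write $Z(t)\rho$ as $\langle Z(t),\dot\gamma\rangle_{g(t)}$, differentiate along $\gamma$ to express it as a boundary term plus $\int_0^s(\nabla Z(t))^\flat(\dot\gamma,\dot\gamma)\,du$, then combine with the index-lemma bound from Lemma~\ref{bound_geodesic} and Assumption~\ref{non-symmetric}(1) so that the Ricci integrals cancel. Your $C_Z=\sup_{t\in[0,T]}|Z(t,\gamma(0))|_{g(t)}$ is exactly the paper's constant, written using $|Z|=\sup_{|X|=1}\langle Z,X\rangle$.
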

\begin{proof}
By a direct calculation, 
$
( \nabla Z (t) )^\flat 
( \dot{\gamma}(s) , \dot{\gamma}(s) ) 
= 
\partial_s 
\langle Z(t) , \dot{\gamma} (s) \rangle_{g(t)} ( \gamma (s) ) 
$. 
Hence we obtain 
\begin{align*}
( Z(t) d_{g(t)} ( \gamma (0) , \cdot ) ) ( \gamma (r) ) 
& = 
\langle Z(t) , \dot{\gamma} (r) \rangle_{g(t)} ( \gamma (r) ) 
\\ 
& = 
\langle Z(t) , \dot{\gamma} (0) \rangle_{g(t)} ( \gamma (0) ) 
+ 
\int_0^t 
( \nabla Z (t) )^\flat ( \dot{\gamma} (s) , \dot{\gamma}(s) ) 
ds . 
\end{align*}
Then, by setting 
$
C_Z 
:= 
\sup_{s \in [0,T]} 
\sup 
\{ 
  \langle Z(s) , X \rangle_{g(s)} ( \gamma (0) ) 
  \, | \, 
  X \in T_{\gamma (0)} M , 
  \| X \|_{g(s)} = 1 
\}
$,
the conclusion follows in a similar way as we did in the proof of Lemma~\ref{bound_geodesic}.
\end{proof}

\vspace{1cm}
\begin{flushright}
Kazumasa Kuwada\\
\vspace{.3cm}
\small
Graduate School of Humanities and Sciences 
\\
\small
Ochanomizu University \\
\small
Tokyo 112-8610, Japan
\\
\vspace{.2cm}
\small
\textit{e-mail}: \texttt{kuwada@math.ocha.ac.jp}
\\
\vspace{1cm}
\normalsize
Robert Philipowski\\
\vspace{.3cm}
\small
Institut f\"{u}r Angewandte Mathematik
\\
\small
Universit\"{a}t Bonn \\
\small
Endenicher Allee 60, 53115 Bonn, Germany
\\
\vspace{.2cm}
\small
\textit{e-mail}: \texttt{philipowski@iam.uni-bonn.de}
\end{flushright}

\end{document}